\newtheorem{theorem}{Theorem}
\newtheorem{lemma}{Lemma}
\newtheorem{proposition}{Proposition}
\newtheorem*{corollary}{Corollary}
\newtheorem*{conjecture}{Conjecture}
\newcommand{\vertbar}{\>|\>}
\newcommand{\set}[2]{\ensuremath{\{ #1 \vertbar #2 \}}}
\def\liebrack{\ensuremath{[\,\cdot\, , \cdot\,]}}
\DeclareMathOperator{\ad}{\mathsf{ad}}
\DeclareMathOperator{\Aut}{Aut}
\DeclareMathOperator{\B}{B}
\DeclareMathOperator{\dcobound}{d}
\DeclareMathOperator{\End}{End}
\DeclareMathOperator{\gl}{\mathsf{gl}}
\DeclareMathOperator{\Hom}{Hom}
\DeclareMathOperator{\HomCycl}{HomCycl}
\DeclareMathOperator{\HomLie}{HomLie}
\DeclareMathOperator{\Homol}{H}
\DeclareMathOperator{\HomtwoNilp}{Hom2Nilp}
\DeclareMathOperator{\id}{id}
\DeclareMathOperator{\Ker}{Ker}
\DeclareMathOperator{\QDer}{QDer}
\DeclareMathOperator{\Sl}{\mathsf{sl}}
\DeclareMathOperator{\Z}{Z}
\begin{document}

\title{Hom-Lie structures on Kac--Moody algebras}

\author{Abdenacer Makhlouf}
\address{Universit\'e de Haute-Alsace, Mulhouse, France}
\email{abdenacer.makhlouf@uha.fr}

\author{Pasha Zusmanovich}
\address{University of Ostrava, Ostrava, Czech Republic}
\email{pasha.zusmanovich@osu.cz}
\date{First written April 24, 2018; last minor revision September 15, 2018}
\thanks{J. Algebra \textbf{515} (2018), 278--297; arXiv:1805.00187}
\keywords{Hom-Lie structure; $2$-cocycle; simple Lie algebra; 
current Lie algebra; affine Kac--Moody algebra; Jordan algebra}

\begin{abstract}
We describe Hom-Lie structures on affine Kac--Moody and related Lie algebras, 
and discuss the question when they form a Jordan algebra.
\end{abstract}

\maketitle

\section*{Introduction}

Hom-Lie algebras (under different names, notably 
``$q$-deformed Witt or Virasoro'') started to appear long time ago in 
the physical literature, in a constant quest for deformed, in that or another
sense, Lie algebra structures bearing a physical significance. The last decade
has witnessed a surge of interest in them, starting with an influential paper 
\cite{hls}.

Recall that a \emph{Hom-Lie algebra} is an anticommutative algebra $L$ with 
multiplication denoted by $\liebrack$, and with a linear map 
$\varphi: L \to L$ such that the following \emph{Hom-Jacobi identity} holds:
\begin{equation}\label{eq-hom-lie}
[[x,y],\varphi(z)] + [[z,x],\varphi(y)] + [[y,z],\varphi(x)] = 0
\end{equation}
for any $x,y,z\in L$. Variations of this notion assume, additionally, that the 
map $\varphi$ is a homomorphism of the algebra $L$, i.e., 
\begin{equation}\label{eq-hom}
\varphi([x,y]) = [\varphi(x),\varphi(y)]
\end{equation}
for any $x,y\in L$, or that $\varphi$ is an isomorphism. Such Hom-Lie algebras 
will be called \emph{multiplicative} or \emph{regular}, respectively.

In this context, the following question arises: to describe all possible 
\emph{Hom-Lie structures} on a given Lie algebra $L$, i.e., all linear maps 
$\varphi: L \to L$ such that the identity (\ref{eq-hom-lie}) holds. (As a more
specific question, one may ask for a description of all \emph{multiplicative}, 
respectively \emph{regular}, \emph{Hom-Lie structures}, i.e. those Hom-Lie
structures which are, additionally, homomorphism, respectively, isomorphism).
Earlier, this task was accomplished for finite-dimensional semisimple Lie 
algebras of characteristic zero (\cite{jl}, \cite{chin2}), for the two-sided 
Witt algebra, for Lie algebras of Cartan type, and for loop algebras (all of 
them are infinite-dimensional Lie algebras of characteristic zero; see 
\cite{xie-liu}). Here we determine Hom-Lie structures on affine Kac--Moody 
algebras. Though the final answer turns out to be not very inspiring (all such structures are 
trivial, in a sense; see Theorem \ref{th-m}), some intermediate computations and
observations, we believe, are of interest.

The content of the paper is as follows. In the auxiliary \S \ref{sec-str} we 
show that the space of Hom-Lie structures on a Lie algebra $L$ carries a natural
structure of an $L$-module. In \S \ref{sec-sim} we prove, in two different ways,
that finite-dimensional simple Lie algebras of classical type do not have 
nontrivial Hom-Lie structures except of the case of $\Sl(2)$. This result is not
new, see \cite{chin2}; our proofs however, are considerably shorter than those 
in \cite{chin2}, do not use case-by-case considerations by types of algebras, 
and do not rely on computer calculations. In \S \ref{sec-tens} we describe 
Hom-Lie structures on Lie algebras represented as tensor products of algebras 
over Koszul dual operads, in the case when one of the operads is commutative, 
and another one is anti-commutative; current Lie algebras come as a particular 
case. In \S \ref{sec-km} we extend these computations to untwisted affine 
Kac--Moody algebras, using their realization as extensions of current Lie 
algebras of a particular type (tensor product of a simple finite-dimensional Lie
algebra with Laurent polynomials). In \S \ref{sec-tw} we treat the case of 
twisted affine Kac--Moody algebras. In \S \ref{sec-jord} we discuss when Hom-Lie
algebra structures on a given Lie algebra form a Jordan algebra, a question raised in \cite{chin2} and \cite{xie-liu}.

\section{Hom-Lie structures as a module}\label{sec-str}

In what follows, the ground field $K$ is assumed arbitrary, of characteristic 
$\ne 2,3$, unless it is stated otherwise. All unadorned $\Hom$'s and tensor 
products are understood over $K$.

Obviously, the set of all Hom-Lie structures on a given Lie algebra $L$ forms
a vector subspace of $\End(L) = \Hom(L,L)$, containing the identity map $\id_L$.

For any natural number $n$, we have the standard $L$-action on the space of 
$n$-linear maps $\Hom(L^{\otimes n},L) \simeq L \otimes (L^*)^{\otimes n}$:
\begin{equation}\label{eq-action}
y \bullet \varphi(x_1, \dots, x_n) = [\varphi(x_1, \dots, x_n),y] 
- \sum_{i=1}^n \varphi(x_1, \dots, x_{i-1}, [x_i,y], x_{i+1}, \dots, x_n) ,
\end{equation}
where $x_1,\dots,x_n,y\in L$, and $\varphi\in \Hom(L^{\otimes n},L)$.

\begin{lemma}\label{lemma-invar}
For any Lie algebra $L$, the space of all Hom-Lie structures on $L$ forms an
$L$-submodule of $\Hom(L,L)$.
\end{lemma}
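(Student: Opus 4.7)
The plan is to recognise the Hom-Lie condition as the kernel of a natural $L$-equivariant map, and then check equivariance. Concretely, define
$$J: \Hom(L,L) \longrightarrow \Hom(L^{\otimes 3}, L), \qquad
J(\varphi)(x,y,z) = [[x,y],\varphi(z)] + [[z,x],\varphi(y)] + [[y,z],\varphi(x)],$$
so that the set of Hom-Lie structures on $L$ is exactly $\Ker J$. Since the kernel of a homomorphism of $L$-modules is automatically an $L$-submodule, it suffices to verify that $J$ intertwines the two instances of the action (\ref{eq-action}), with $n=1$ on the source and $n=3$ on the target.

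To prove this $L$-equivariance I would split $J$ into its three cyclic summands and verify the equivariance of each individually; cyclic summation then preserves the property. For the representative summand $T(\varphi)(x,y,z) = [[x,y],\varphi(z)]$, expanding $T(u\bullet\varphi)$ and $u\bullet T(\varphi)$ by (\ref{eq-action}) produces iterated brackets whose matching requires exactly two applications of the Jacobi identity of $L$: first, the identity $[[x,u],y] + [x,[y,u]] = [[x,y],u]$ collapses the two derivation-style terms on $x$ and $y$ coming from $u\bullet T(\varphi)$; second, the Jacobi identity applied to the triple $\{[x,y], \varphi(z), u\}$ rewrites $[[[x,y], \varphi(z)], u] - [[[x,y], u], \varphi(z)]$ as $[[x,y], [\varphi(z), u]]$, matching the leading term of $T(u\bullet\varphi)$. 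The derivation-style term on $z$ in $u\bullet T(\varphi)$ matches the remaining term $-[[x,y], \varphi([z,u])]$ of $T(u\bullet\varphi)$ on the nose.

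Notably, no hypothesis on $\varphi$ is used in this verification, so in fact $J$ is $L$-equivariant on the whole of $\Hom(L,L)$, and the lemma follows. The only obstacle in the argument is bookkeeping — no identity beyond the Jacobi identity of $L$ is invoked.
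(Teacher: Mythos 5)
Your proposal is correct and is essentially the paper's own argument: the paper likewise proves the lemma by establishing the identity $J_{h\bullet\varphi}=h\bullet J_\varphi$ (i.e.\ the $L$-equivariance of $J$) and concluding that the space of Hom-Lie structures, being $\Ker J$, is a submodule. Your summand-by-summand verification via the two applications of the Jacobi identity is exactly the ``straightforward computation'' the paper leaves to the reader, and it checks out.
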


\begin{proof}
Denote by $J_\varphi(x,y,z)$ the left-hand side of the Hom-Jacobi identity 
(\ref{eq-hom-lie}), considered as an element of $\Hom(L^{\otimes 3},L)$.
The statement of lemma follows from the identity
$$
J_{h\bullet\varphi}(x,y,z) = h\bullet J_\varphi(x,y,z)
$$
which holds for any $x,y,z,h\in L$, and $\varphi\in \Hom(L,L)$ (note that the 
bullet at the left-hand side denotes the action (\ref{eq-action}) on 
$\Hom(L,L)$, i.e. with $n=1$, whereas the bullet at the right-hand side denotes 
the same action on $\Hom(L^{\otimes 3}, L)$, i.e. with $n=3$). This identity is
verified by a straightforward computation.
\end{proof}

The $L$-module of all Hom-Lie structures on a Lie algebra $L$ will be denoted as
$\HomLie(L)$; it always contains a trivial submodule $K\id_L$.

Suppose that $T$ is a torus in a Lie algebra $L$, and 
$L = L_0 \oplus \bigoplus_{\alpha\in R} L_\alpha$ is the root space 
decomposition with respect to the $T$-action. Then, according to 
Lemma \ref{lemma-invar}, this action induces a semisimple action on 
$\HomLie(L)$, with the corresponding root space decomposition
\begin{equation}\label{hom-lie-root}
\HomLie(L) = \bigoplus_{\alpha\in R - R} \HomLie_\alpha(L) ,
\end{equation}
where $R-R$ denotes the set of all differences between two roots in $R$, and
$$
\HomLie_\alpha(L) = 
\set{\varphi\in \HomLie(L)}{\varphi(L_\beta) \subseteq L_{\beta+\alpha} \text{ for any } \beta\in R \cup \{0\}} .
$$

Another related observation is that the space of Hom-Lie structures is stable
with respect to the standard action by conjugation of $\Aut(L)$ on $\End(L)$. 
Indeed, if $\varphi$ is a Hom-Lie structure, and $\alpha$ is an automorphism of
$L$, then writing the Hom-Jacobi identity for the triple 
$\alpha(x)$, $\alpha(y)$, $\alpha(z)$, and then applying to it the automorphism $\alpha^{-1}$, we get
$$
  [[x,y], \alpha^{-1}(\varphi(\alpha(z)))] 
+ [[z,x], \alpha^{-1}(\varphi(\alpha(y)))] 
+ [[y,x], \alpha^{-1}(\varphi(\alpha(y)))] 
= 0
$$
for any $x,y,z \in L$., i.e. $\alpha^{-1} \circ \varphi \circ \alpha$ is also
a Hom-Lie structure on $L$.

\section{Simple Lie algebras of classical type}\label{sec-sim}

Consider the $3$-dimensional simple Lie algebra with the basis $e_-, h, e_+$
and multiplication table
\begin{equation*}
[e_-,h] = -e_-, \quad [e_+,h] = e_+, \quad [e_-,e_+] = h .
\end{equation*}

Over a field of characteristic $\ne 2$, this algebra is isomorphic to $\Sl(2)$.
The computation of Hom-Lie structures on this algebra is straightforward and was
recorded several times in the literature; see, for example, 
\cite[Proposition 7.1]{ms} or \cite[Theorem 3.3(i)]{chin2}. We record it also
here for completeness and further references.

\begin{proposition}\label{prop-sl2}
As an $\Sl(2)$-module, $\HomLie(\Sl(2))$ is isomorphic to the direct sum of the
$5$-di\-men\-si\-o\-nal irreducible module and the one-dimensional trivial 
module.
\end{proposition}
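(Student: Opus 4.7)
The plan is to combine Lemma~\ref{lemma-invar} with a weight-space dimension count. First I would decompose $\End(\Sl(2)) \simeq \Sl(2) \otimes \Sl(2)^*$ as an $\Sl(2)$-module; by the Clebsch--Gordan rule it splits as a direct sum of three pairwise nonisomorphic irreducibles, of dimensions $5$, $3$, and $1$. The trivial summand is spanned by $\id_{\Sl(2)}$ and is automatically contained in $\HomLie(\Sl(2))$, so by Lemma~\ref{lemma-invar} the problem reduces to deciding which of the remaining two irreducible summands lies inside $\HomLie(\Sl(2))$.

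Since each summand is irreducible, it suffices to test a single highest weight vector from each. I would use the root-space decomposition \eqref{hom-lie-root} relative to the torus $Kh$: here the $h$-weights of $e_\pm$ are $\pm 1$, so $R = \{\pm 1\}$ and $R - R = \{-2,-1,0,1,2\}$. The weight-$2$ part of $\End(\Sl(2))$ is one-dimensional, spanned by $\varphi \colon e_+ \mapsto e_-$, $h \mapsto 0$, $e_- \mapsto 0$, which is the highest weight vector of the $5$-dimensional summand. Because $\liebrack$ is anticommutative, the Hom-Jacobi expression \eqref{eq-hom-lie} is antisymmetric in its three arguments, so on basis elements it needs to be checked only on the triple $(e_-, h, e_+)$; for this $\varphi$ the single potentially nonzero contribution is $[[e_-, h], \varphi(e_+)] = [-e_-, e_-] = 0$, so the entire $5$-dimensional summand lies in $\HomLie(\Sl(2))$.

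To rule out the $3$-dimensional summand I would examine the weight-$1$ part of $\End(\Sl(2))$. This is $2$-dimensional, spanned by $h \mapsto e_-$ and $e_+ \mapsto h$ (other basis elements sent to $0$), so a general element is parameterized by scalars $c_1, c_2$. Evaluating \eqref{eq-hom-lie} on the triple $(e_-, h, e_+)$ yields the single linear constraint $c_1 = c_2$, so the weight-$1$ part of $\HomLie(\Sl(2))$ is one-dimensional; since that one dimension is already accounted for by the $5$-dimensional summand (obtained from $\varphi$ above via the action \eqref{eq-action} of $e_+$), the $3$-dimensional summand contributes nothing, yielding the claimed decomposition. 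The only real obstacle is the two direct Hom-Jacobi checks, but the antisymmetry observation above makes each of them a one-line commutator computation, so no genuine difficulty arises.
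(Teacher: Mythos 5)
Your argument is correct, and it reaches the conclusion by a genuinely different route than the paper. The paper works in the opposite order: it first observes (exactly as you do) that the alternating character of the Hom-Jacobi expression reduces everything to the single triple $(e_-,h,e_+)$, so that $\HomLie(\Sl(2))$ is cut out of the $9$-dimensional $\End(\Sl(2))$ by three scalar equations and is therefore $6$-dimensional; it then writes the quotient by $K\id$ as an explicit $5$-parameter family of $3\times 3$ matrices and verifies by direct computation that this space is the irreducible $5$-dimensional module. You instead decompose $\End(\Sl(2))\simeq \Sl(2)\otimes\Sl(2)^*$ by Clebsch--Gordan into three pairwise nonisomorphic irreducibles first, invoke Lemma~\ref{lemma-invar} to see that $\HomLie(\Sl(2))$ must be a sum of a subset of them, and then decide membership summand by summand: one extreme-weight vector check puts the $5$-dimensional piece in, and a weight-space dimension count (the weight-$(\pm 1)$ part of $\HomLie(\Sl(2))$ is only $1$-dimensional, already exhausted by the $5$-dimensional summand) rules the $3$-dimensional piece out. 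What your version buys is that the only explicit computations are two one-line commutator checks, and the module structure comes for free rather than being identified a posteriori; what the paper's version buys is an explicit matrix description of all Hom-Lie structures, which is reused later (in the proof of Proposition~\ref{prop-tw}). Two cosmetic remarks: with the paper's multiplication table one has $[h,e_-]=e_-$, so your weight labels are globally negated relative to the action \eqref{eq-action}, which affects nothing; and both your proof and the paper's rely on complete reducibility and the Clebsch--Gordan splitting, which is unproblematic in characteristic $0$ but would need care in small positive characteristic --- a caveat the paper shares.
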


\begin{proof}
The Hom-Lie equation for a linear map $\varphi: \Sl(2) \to \Sl(2)$ is equivalent
to
$$
[h,\varphi(h)] + [e_-,\varphi(e_+)] + [e_+,\varphi(e_-)] = 0.
$$
This removes $3$ parameters from the $9$-dimensional vector space 
$\End(\Sl(2))$, so $\HomLie(\Sl(2))$ is $6$-dimensional. The elements of the
quotient $\HomLie(\Sl(2))/K\id_{\Sl(2)}$, being written as $3\times 3$ matrices
in the basis $\{e_-, h, e_+\}$, can be represented as additive equivalence 
classes, modulo identity matrix, of matrices of the form
$$
\left(
\begin{matrix}
\alpha      & \beta  & \gamma \\
\delta      & 0      & \beta  \\
\varepsilon & \delta & \alpha
\end{matrix}
\right)
$$
where $\alpha,\beta,\gamma,\delta,\varepsilon \in K$. Elementary computations 
show then that, as an $\Sl(2)$-module, this $5$-dimensional space is isomorphic
to the irreducible (highest weight) module. Since representations of $\Sl(2)$
are completely irreducible, $\Hom(\Sl(2))$ decomposes as the direct sum of
this module and the one-dimensional trivial module.
\end{proof}

Till the end of this section, $\mathfrak g$ denotes a finite-dimensional simple
Lie algebra of classical type (this, of course, includes all finite-dimensional
simple Lie algebras over an algebraically closed field of characteristic zero),
not isomorphic to $\Sl(2)$ .

\begin{theorem}\label{th-s}
$\HomLie(\mathfrak g) = K \id_{\mathfrak g}$.
\end{theorem}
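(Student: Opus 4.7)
The plan is to use the $\mathfrak{g}$-module structure on $\HomLie(\mathfrak{g})$ afforded by Lemma \ref{lemma-invar}. After passing to the algebraic closure of $K$ --- an extension which can only enlarge $\HomLie$, so it suffices to prove the statement there --- the algebra $\mathfrak{g}$ is split simple, and the Killing form gives $\End(\mathfrak{g}) \cong \mathfrak{g} \otimes \mathfrak{g}$ as $\mathfrak{g}$-modules. Its trivial isotypic component is precisely $K\id_{\mathfrak{g}}$, which already lies in $\HomLie(\mathfrak{g})$. The theorem thus reduces to showing that no non-trivial irreducible $\mathfrak{g}$-submodule of $\End(\mathfrak{g})$ consists entirely of Hom-Lie structures.

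My preferred approach is via the root-space decomposition (\ref{hom-lie-root}): fix a Cartan subalgebra $\mathfrak{h}$ with root system $R$ and analyze each weight component $\HomLie_\alpha(\mathfrak{g})$ separately. For $\alpha = 0$, an element $\varphi \in \HomLie_0(\mathfrak{g})$ multiplies each root space $\mathfrak{g}_\beta$ by a scalar $c_\beta$ and restricts to some endomorphism of $\mathfrak{h}$; applying the Hom-Jacobi identity to triples of the form $(e_\beta, e_\gamma, e_{-\beta-\gamma})$ and $(e_\beta, e_{-\beta}, h)$ with linearly independent roots $\beta, \gamma$ whose sum is again a root --- available because $\operatorname{rank}(\mathfrak{g}) \geq 2$ --- should force all $c_\beta$ to coincide and $\varphi|_{\mathfrak{h}}$ to equal the same scalar multiple of $\id_{\mathfrak{h}}$. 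For $\alpha \neq 0$, a parallel analysis applying Hom-Jacobi to triples of root vectors whose weights cancel after the shift forces the corresponding component to vanish. A second, more conceptual route would decompose $\End(\mathfrak{g})$ into isotypic components and, for each non-trivial one, exhibit a highest-weight vector together with an explicit triple on which Hom-Jacobi fails; since a submodule containing such a component must also contain its highest-weight vector, failure there kills the entire component at once.

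The hard part is the shifted-weight analysis for $\alpha \neq 0$, especially when $\alpha$ is itself a root: the components of $\varphi$ act as weight-shifting operators $e_\beta \mapsto (\text{scalar})\,e_{\alpha+\beta}$ reminiscent of inner derivations, and a single Hom-Jacobi relation is insufficient to rule them out. The hypothesis $\mathfrak{g} \neq \Sl(2)$ enters decisively here: in $\Sl(2)$ the root system is too small to provide enough non-degenerate triples, which is precisely why the additional $5$-dimensional component in Proposition \ref{prop-sl2} survives. In rank $\geq 2$, the abundance of triples $(\beta, \gamma, -\beta - \gamma) \subset R$ furnished by an $\Sl(3)$ or other rank-two subsystem supplies the extra constraints that kill every non-trivial isotypic component, leaving only $K\id_{\mathfrak{g}}$.
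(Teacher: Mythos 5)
Your overall strategy --- the root-space decomposition (\ref{hom-lie-root}) followed by a weight-by-weight analysis --- is exactly the paper's first proof, so the approach is sound; the problem is that the argument is not actually carried out where it matters. You yourself flag the components $\HomLie_\sigma(\mathfrak g)$ with $\sigma\ne 0$ as ``the hard part'' and then dispose of them with the assertion that rank-two subsystems ``supply the extra constraints.'' That case is the real content of the theorem, and it is not a formality: the paper handles it by splitting into the cases $\varphi(H)=0$ and $\varphi(H)\ne 0$ ($H$ the Cartan subalgebra) and, crucially, by using triples containing Cartan elements chosen to annihilate selected terms --- e.g.\ the triple $h, e_\alpha, e_{-\sigma}$ with $\alpha(h)=0$ and $\sigma(h)\ne 0$, which isolates the single relation $[e_{-\sigma},\varphi(e_\alpha)]=0$. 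Triples consisting only of root vectors, as in your sketch, do not visibly rule out a weight-$\sigma$ map that imitates $\ad e_\sigma$ (which is \emph{not} a Hom-Lie structure, but one has to compute to see why); asserting that ``the constraints kill every non-trivial isotypic component'' without exhibiting a failing triple for each one is a gap, not a proof.

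There is also a concrete error in your $\sigma=0$ analysis. For $\varphi\in\HomLie_0(\mathfrak g)$ one has $\varphi(H)\subseteq H$, so in the Hom-Jacobi identity for your triple $(e_\beta,e_{-\beta},h)$ the term $[[e_\beta,e_{-\beta}],\varphi(h)]$ lies in $[H,H]=0$; that triple yields only $c_\beta=c_{-\beta}$ and constrains $\varphi|_{H}$ not at all. To pin down $\varphi|_{H}$ you need triples with \emph{two} Cartan elements, $(h,h^\prime,e_\alpha)$, which give the paper's relation (\ref{eq-h}), hence $\varphi(\Ker\alpha)\subseteq\Ker\alpha$ for every root, hence $\varphi|_H=\lambda\id_H$. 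Finally, a smaller caveat: your reduction to ``no non-trivial irreducible submodule of $\End(\mathfrak g)$ consists of Hom-Lie structures'' presumes that $\HomLie(\mathfrak g)$ decomposes into irreducibles, i.e.\ complete reducibility of $\End(\mathfrak g)$; this is fine in characteristic zero but not automatic for classical type in positive characteristic, which the theorem also covers --- one reason the paper works only with the (always semisimple) torus action rather than with isotypic components. You might also look at the paper's second proof, which replaces all of this by a reduction to known descriptions of quasiderivations and $2$-cocycles of $\mathfrak g$ via Lemmas \ref{lemma-t}--\ref{lemma-cl}.
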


As was noted in the introduction, this was already established in \cite{chin2}.
We will provide two alternative proofs of this theorem, one basing on properties
of root systems, and another one utilizing connection with other invariants of
$\mathfrak g$, such as $2$-cocycles and generalized derivations.

\begin{proof}[First proof]
Let $\ell > 1$ be the rank of $\mathfrak g$. Assuming the standard root space 
decomposition $\mathfrak g = H \oplus \bigoplus_{\alpha\in R} Ke_\alpha$ with respect to a 
Cartan subalgebra $H$ of $\mathfrak g$, consider the induced decomposition 
(\ref{hom-lie-root}). Let $\varphi \in \HomLie_\sigma(\mathfrak g)$ for 
$\sigma \ne 0$. 

Suppose first that $\varphi(H) = 0$. Let $\alpha\in R$ be an arbitrary root. 
If $\alpha + \sigma \not\in R$, then $\varphi(e_\alpha) = 0$. 
If $\alpha + \sigma \in R$, then $\alpha$ is linearly independent with $\sigma$.
Pick $h\in H$ such that $\alpha(h) = 0$ and $\sigma(h) \ne 0$. Then the 
Hom-Jacobi identity for triple $h, e_\alpha, e_{-\sigma}$ yields 
$[e_{-\sigma}, \varphi(e_\alpha)] = 0$. As $\varphi(e_\alpha) \in K e_{\alpha+\sigma}$, and 
$[e_{-\sigma}, e_{\alpha+\sigma}] \ne 0$, we have in this case 
$\varphi(e_\alpha) = 0$ as well, and $\varphi$ vanishes.

Suppose now that $\varphi(H) \ne 0$. We have $\varphi(h) = f(h) e_\sigma$ for 
any $h\in H$ and some nonzero linear map $f: \mathfrak g \to K$. Normalize $f$ 
by decomposing $H$ as the direct sum of vector spaces $H = Kh \oplus H^\prime$ 
for a suitable $h\in H$ such that $f(h) = 1$ and $f(H^\prime) = 0$. Then, the 
Hom-Jacobi identity for triples $h,h^\prime,e_\alpha$ and $h,h^\prime,e_{-\sigma}$, where 
$h^\prime\in H^\prime$ and $\alpha\in R$ such that $\sigma + \alpha \in R$, 
implies $\alpha(h^\prime) = 0$ and $\sigma(h^\prime) = 0$, respectively. This, in turn, implies that the set
of roots $\alpha\in R$ such that $\alpha + \sigma\in R$ is empty, an obvious
contradiction. 

Therefore, $\varphi \in \HomLie_0(L)$, i.e. $\varphi$ preserves the standard 
root space decomposition. The rest is easy and follows, more or less, the line 
of reasonings in \cite{jl}. The Hom-Jacobi identity for triple 
$h, h^\prime, e_\alpha$, where $h,h^\prime\in H$ and $\alpha\in R$, implies 
\begin{equation}\label{eq-h}
\alpha(h) \alpha(\varphi(h^\prime)) = \alpha(h^\prime) \alpha(\varphi(h)) .
\end{equation}
Picking in this equality $h^\prime$ such that $\alpha(h^\prime) \ne 0$, we
get that $\varphi(\Ker \alpha) \subseteq \Ker \alpha$. Consequently, 
$$
\varphi(\bigcap_{\alpha \in S} \Ker \alpha) \subseteq \bigcap_{\alpha \in S} \Ker \alpha
$$
for any subset $S \subseteq R$. As for any nonzero $h \in H$ one may choose
$\ell-1$ roots $\alpha_1, \dots, \alpha_{\ell-1}$ such that 
$\bigcap_{1 \le i \le \ell-1} \Ker \alpha_i = Kh$, we have 
$\varphi(h) = \lambda(h) h$ for some map $\lambda: H \to K$. Now the equality
(\ref{eq-h}) reads 
$$
\alpha(h)\alpha(h^\prime)(\lambda(h) - \lambda(h^\prime)) = 0
$$ 
for any $h,h^\prime \in H$ and $\alpha \in R$, what implies that 
$\lambda(\cdot)$ is a constant map: $\lambda(h) = \lambda$ for some $\lambda \in K$. Writing
$\varphi(e_\alpha) = \lambda_\alpha e_\alpha$ for some $\lambda_\alpha \in K$, 
the Hom-Jacobi identity for triple $h\in H$, $e_\alpha, e_\beta$ such that 
$\alpha+\beta \in R$, yields
$$
\alpha(h) (\lambda - \lambda_\beta) + \beta(h) (\lambda - \lambda_\alpha) = 0 .
$$
Picking $h$, $\alpha$ and $\beta$ such that $\alpha(h) = 0$ and 
$\beta(h) \ne 0$, we get $\lambda_\alpha = \lambda$ for any such $\alpha$, and
varying $h$, we get $\lambda_\alpha = \lambda$ for any $\alpha \in R$. 
Therefore, $\varphi = \lambda \id_L$.
\end{proof}

To provide a second proof of Theorem \ref{th-s}, we need a series of definitions
and lemmas, some of them could be of independent interest.

For a Lie algebra $L$, a bilinear map: $\varphi: L \times L \to K$ satisfying
the $2$-cocycle equation
$$
\varphi([x,y],z) + \varphi([z,x],y) + \varphi([y,z],x) = 0
$$
for any $x,y,z \in L$, is called an \emph{asymmetric $2$-cocycle} (note that we
do not require $\varphi$ to be skew-symmetric, like in the classical definition
of the Chevalley--Eilenberg cohomology, or symmetric, like in \cite{comm2}). 
The vector space of all asymmetric $2$-cocycles on $L$ is denoted by 
$\mathcal Z^2(L)$.

\begin{lemma}\label{lemma-t}
Let $L$ be a Lie algebra with a bilinear symmetric invariant form 
$\langle \cdot,\cdot \rangle$, $t\in L$, and $\varphi$ a Hom-Lie structure on 
$L$. Then the map $f_t: L \times L \to K$ defined by 
$f_t(x,y) = \langle \varphi(y),[x,t] \rangle$ for $x,y\in L$, is an asymmetric
$2$-cocycle on $L$.
\end{lemma}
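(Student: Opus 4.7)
The plan is to pair the Hom-Jacobi identity for the triple $x,y,z$ with the element $t$ using the invariant form, and to read off the resulting identity as exactly the asymmetric $2$-cocycle condition for $f_t$.

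Concretely, I would start from
$$
[[x,y],\varphi(z)] + [[z,x],\varphi(y)] + [[y,z],\varphi(x)] = 0
$$
and take the inner product of both sides with $t$. Then I would rewrite each of the three summands by using symmetry and invariance of $\langle\cdot,\cdot\rangle$; the convenient form is
$\langle [a,b], c\rangle = \langle [c,a], b\rangle$, which for a symmetric invariant form is equivalent to $\langle [a,b], c\rangle + \langle b, [a,c]\rangle = 0$. Applied to the first summand,
$$
\langle [[x,y],\varphi(z)], t \rangle = \langle [t,[x,y]], \varphi(z) \rangle = -\langle \varphi(z), [[x,y],t] \rangle = -f_t([x,y], z),
$$
and similarly for the two other cyclic summands. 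Collecting the three relations and multiplying by $-1$ yields precisely
$$
f_t([x,y],z) + f_t([z,x],y) + f_t([y,z],x) = 0,
$$
i.e., $f_t \in \mathcal Z^2(L)$.

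There is no real obstacle here: the only subtlety is keeping track of the signs produced by symmetry versus invariance, and choosing the correct form of the invariance identity so that $t$ ends up inside the inner bracket where the definition of $f_t$ demands it. Bilinearity of $f_t$ in $x$ and $y$ is automatic from bilinearity of $\langle\cdot,\cdot\rangle$ and of the bracket, and no hypothesis beyond symmetry and invariance of $\langle\cdot,\cdot\rangle$ is used, so the argument is valid over the arbitrary ground field fixed at the start of the section.
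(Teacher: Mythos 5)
Your proposal is correct and follows exactly the paper's argument: the paper's proof is the one-line remark ``apply $\langle\,\cdot\,,t\rangle$ to the Hom-Jacobi identity and use invariance,'' and your computation simply spells out the sign bookkeeping that this entails. The identity $\langle [a,b],c\rangle = \langle [c,a],b\rangle$ and the resulting evaluation $\langle [[x,y],\varphi(z)],t\rangle = -f_t([x,y],z)$ are both right, so the cyclic sum yields the asymmetric $2$-cocycle equation as claimed.
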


\begin{proof}
Applying $\langle \cdot, t \rangle$ to the both sides of the Hom-Jacobi 
identity, and using invariance of $\langle \cdot,\cdot \rangle$, we get the 
$2$-cocycle equation for $f_t$.
\end{proof}

Now, letting $L$ be a Lie algebra and $M$ an $L$-module, denote by $\QDer(L,M)$ 
the space of linear maps $D: L \to M$ such that there exists another linear map 
$F_D: L \to M$ satisfying the condition 
$$
D([x,y]) = y \bullet F_D(x) - x \bullet F_D(y)
$$ 
for any $x,y\in L$, where $\bullet$ denotes the module action. Such maps are 
called \textit{quasiderivations} (or sometimes \emph{generalized derivations}) 
in the literature. In the special case where $F_D = \delta D$ for some fixed 
$\delta \in K$, they are called \emph{$\delta$-derivations}, and in the case 
$\delta = -1$, \emph{antiderivations}. All these generalizations of derivations
were studied extensively in the literature in the case $M=L$, the adjoint 
module; see, for example, \cite{filippov-1} and \cite{leger-luks}, and references therein.

Let $\B(L)$ denote the space of bilinear maps $\varphi: L \times L \to K$ such 
that
$$
\varphi([x,y],z) = \varphi([z,x],y)
$$ 
for any $x,y,z\in L$.

\begin{lemma}\label{lemma-seq}
For any Lie algebra $L$, the following sequence is exact:
\begin{equation*}
0 \to \mathcal Z^2(L) \overset{u}\to \QDer(L,L^*) \overset{v}\to \B(L) ,
\end{equation*}
where the maps $u$ and $v$ are defined as follows: for 
$\varphi\in \mathcal Z^2(L)$, 
$$
(u(\varphi)(x))(y) = \varphi(x,y) ,
$$
with 
$$
(F_{u(\varphi)}(x))(y) = -\varphi(y,x) ,
$$
and for $D\in \QDer(L,L^*)$, 
$$
v(D)(x,y) = D(x)(y) + F_D(y)(x)
$$
for any $x,y\in L$.
\end{lemma}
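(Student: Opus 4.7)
The plan is to unravel the definitions of $u$, $v$, and the coadjoint $L$-action $(y\bullet f)(x) = -f([x,y])$ on $L^*$, and then verify each exactness assertion by direct computation. A single bookkeeping identity governs the whole argument: upon pairing both sides of the quasiderivation equation for $u(\varphi)$ against an arbitrary $z\in L$, the condition that $u(\varphi)$ be a quasiderivation reduces to
$$
\varphi([x,y],z) = \varphi([z,y],x) - \varphi([z,x],y),
$$
which, via $[y,z]=-[z,y]$ and bilinearity of $\varphi$, is equivalent to the asymmetric $2$-cocycle identity.

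First I would check that $u$ is well-defined: given $\varphi\in\mathcal Z^2(L)$, the computation just described shows that $u(\varphi)$, together with the proposed $F_{u(\varphi)}(x)(\cdot) = -\varphi(\cdot,x)$, satisfies the quasiderivation equation. Injectivity of $u$ is immediate, since $u(\varphi)=0$ forces $\varphi=0$. In parallel I would verify that $v$ indeed lands in $\B(L)$: expanding $v(D)([x,y],z) = D([x,y])(z) + F_D(z)([x,y])$ and $v(D)([z,x],y) = D([z,x])(y) + F_D(y)([z,x])$ via the quasiderivation condition for $D$, both collapse to the same symmetric-looking expression
$$
F_D(z)([x,y]) - F_D(x)([z,y]) + F_D(y)([z,x]).
$$

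Next, $v\circ u=0$ is immediate from the cancellation $v(u(\varphi))(x,y) = u(\varphi)(x)(y) + F_{u(\varphi)}(y)(x) = \varphi(x,y) - \varphi(x,y) = 0$. For the reverse inclusion, suppose $D\in\QDer(L,L^*)$ with $v(D)=0$, so that $F_D(y)(x) = -D(x)(y)$ for all $x,y$. Setting $\varphi(x,y) := D(x)(y)$ gives $u(\varphi) = D$ by construction, and substituting $F_D(a)(b) = -\varphi(b,a)$ into the quasiderivation identity for $D$ evaluated at an arbitrary $z$ produces exactly the rewritten form of the cocycle equation, so $\varphi\in\mathcal Z^2(L)$. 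There is no genuine obstacle here; the only point requiring mild care is sign-tracking for the coadjoint action and the asymmetry of $\varphi$ (and implicitly treating $\QDer(L,L^*)$ as pairs $(D,F_D)$ so that $v$ is well defined).
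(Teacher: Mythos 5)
Your computation is correct, and it is exactly the direct verification the paper has in mind (the paper omits the proof, noting only that it repeats the symmetric analogue from the commutative-$2$-cocycle setting): the coadjoint action $(y\bullet f)(x)=-f([x,y])$ turns the quasiderivation identity for $u(\varphi)$ into the asymmetric cocycle identity, and the rest is the sign bookkeeping you carried out. Your parenthetical remark that $v$ is really defined on pairs $(D,F_D)$ is a fair and correctly handled observation about the statement as written.
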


Being restricted to skew-symmetric $2$-cocycles, this exact sequence reduces
to a well-known one, which goes back to the classic works of Koszul and 
Hoch\-schild--Serre, connecting the second cohomology of a Lie algebra $L$
with coefficients in the trivial module $\Homol^2(L,K)$, the first cohomology 
with coefficients in the coadjoint module $\Homol^1(L,L^*)$, and the space of 
symmetric invariant bilinear forms on $L$. Being restricted to symmetric 
$2$-cocycles, this exact sequence reduces to a ``symmetric'' analog of the 
latter, relating the so-called commutative $2$-cocycles and antiderivations (see
\cite[\S 1]{comm2}). The proof of Lemma \ref{lemma-seq} is straightforward, 
repeating almost verbatim the proof of the latter symmetric analog, and is 
omitted here.

Now we apply these lemmas to the specific case of simple Lie algebras of 
classical type with the Killing form $\langle \cdot,\cdot \rangle$.

\begin{lemma}\label{lemma-cl}
$\mathcal Z^2(\mathfrak g) = \B^2(\mathfrak g,K)$. In particular, any asymmetric
$2$-cocycle on $\mathfrak g$ is skew-symmetric.
\end{lemma}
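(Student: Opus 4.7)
The plan is to derive Lemma~\ref{lemma-cl} from the exact sequence of Lemma~\ref{lemma-seq} together with a classical structure theorem for quasiderivations of simple Lie algebras. The Killing form $\langle\cdot,\cdot\rangle$ realizes $\mathfrak g$ and $\mathfrak g^*$ as isomorphic $\mathfrak g$-modules, and hence $\QDer(\mathfrak g,\mathfrak g^*) \cong \QDer(\mathfrak g,\mathfrak g)$. By the theorem of Leger and Luks~\cite{leger-luks}, for any finite-dimensional simple Lie algebra $\mathfrak g$ of classical type not isomorphic to $\Sl(2)$,
\begin{equation*}
\QDer(\mathfrak g,\mathfrak g) = K\id_{\mathfrak g} \oplus \ad(\mathfrak g) ,
\end{equation*}
whose dimension is $1+\dim \mathfrak g$.

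Feeding this into the injection $u: \mathcal Z^2(\mathfrak g) \hookrightarrow \QDer(\mathfrak g,\mathfrak g^*)$ of Lemma~\ref{lemma-seq}, I obtain the upper bound $\dim \mathcal Z^2(\mathfrak g) \le 1+\dim \mathfrak g$. On the other hand, the Chevalley--Eilenberg $2$-coboundary space $\B^2(\mathfrak g,K)$ sits inside $\mathcal Z^2(\mathfrak g)$, and since $[\mathfrak g,\mathfrak g]=\mathfrak g$ the coboundary map $\mathfrak g^* \to \Hom(\mathfrak g \otimes \mathfrak g, K)$ is injective, so $\dim \B^2(\mathfrak g,K) = \dim \mathfrak g$. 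Thus
\begin{equation*}
\dim \mathfrak g \le \dim \mathcal Z^2(\mathfrak g) \le 1+\dim \mathfrak g ,
\end{equation*}
and the lemma reduces to showing that the extra summand $K\id_{\mathfrak g}$ of $\QDer$ does not meet the image of $u$.

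To rule it out, I trace the identification: $\id_{\mathfrak g}$ corresponds, via the Killing form, to the quasiderivation $x \mapsto \langle x,\cdot\rangle$ in $\QDer(\mathfrak g,\mathfrak g^*)$. If this were $u(\varphi)$ for some $\varphi \in \mathcal Z^2(\mathfrak g)$, then $\varphi$ would be the Killing form itself. A direct substitution into the $2$-cocycle identity, using invariance and symmetry of $\langle\cdot,\cdot\rangle$, collapses the left-hand side to $3\langle[x,y],z\rangle$, which does not vanish identically on simple $\mathfrak g$. Hence the Killing form is not an asymmetric $2$-cocycle, forcing $\dim \mathcal Z^2(\mathfrak g) = \dim \mathfrak g = \dim \B^2(\mathfrak g,K)$, and the inclusion $\B^2(\mathfrak g,K) \subseteq \mathcal Z^2(\mathfrak g)$ upgrades to equality. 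The ``in particular'' clause is immediate, as every coboundary $df$ (with $f \in \mathfrak g^*$) is manifestly skew-symmetric in its two arguments.

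The main obstacle is the invocation of the Leger--Luks description of $\QDer(\mathfrak g)$: this is the one nontrivial external ingredient, and it is exactly what fails for $\Sl(2)$ (matching the extra $5$-dimensional summand of $\HomLie(\Sl(2))$ exhibited in Proposition~\ref{prop-sl2}). Everything else is formal bookkeeping through the Killing-form duality together with a short invariance calculation to exclude the identity summand.
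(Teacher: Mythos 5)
Your proof is correct and follows essentially the same route as the paper: both combine the exact sequence of Lemma~\ref{lemma-seq} with the Leger--Luks description $\QDer(\mathfrak g,\mathfrak g)=\ad(\mathfrak g)\oplus K\id_{\mathfrak g}$ and then kill the identity summand by observing that the Killing form fails the cocycle identity (your $3\langle[x,y],z\rangle\neq 0$ is exactly the paper's ``elementary computation shows that $\lambda=0$''). The only cosmetic difference is that you argue by dimension count while the paper parametrizes the cocycles directly as $\langle[x,t]+\lambda x,y\rangle$; the content is identical.
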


(Here $\B^2$ denotes the usual space of Chevalley--Eilenberg $2$-coboundaries).

\begin{proof}
By \cite[Corollary 4.14]{leger-luks}, 
$\QDer(\mathfrak g, \mathfrak g) = \ad(\mathfrak g) \oplus K \id_{\mathfrak g}$.
But since $\mathfrak g^* \simeq \mathfrak g$ as $\mathfrak g$-modules, by 
Lemma \ref{lemma-seq} any asymmetric $2$-cocycle $\varphi$ on $\mathfrak g$ has
the form $\varphi(x,y) = \langle [x,t] + \lambda x , y \rangle$, where 
$x,y\in \mathfrak g$, and $t$ and $\lambda$ are some fixed elements of 
$\mathfrak g$ and $K$, respectively. An elementary computation shows that $\lambda = 0$, and
hence $\varphi$ is skew-symmetric and coincides with a $2$-coboundary on 
$\mathfrak g$.
\end{proof}

\begin{proof}[Second proof of Theorem \ref{th-s}]
Let $\varphi$ be a Hom-Lie structure on $\mathfrak g$. Combining 
Lemmas \ref{lemma-t} and \ref{lemma-cl}, we get
$$
\langle \varphi(y),[x,t] \rangle + \langle \varphi(x),[y,t] \rangle = 0
$$
for any $x,y,t \in L$. Using invariance and nondegeneracy of 
$\langle \cdot,\cdot \rangle$, this equality is equivalent to
$$
[\varphi(y),x] + [\varphi(x),y] = 0 .
$$
Then by \cite[Theorem 5.23]{leger-luks}, $\varphi$ is a scalar multiple of 
$\id_{\mathfrak g}$.
\end{proof}

Computations of Hom-Lie structures on both finite- and infinite-dimensional
simple Lie algebras in characteristic zero, suggest the following 

\begin{conjecture}
If a simple finite-dimensional Lie algebra admits a nontrivial Hom-Lie 
structure, then it is isomorphic either to a $3$-dimensional simple algebra,
or, in the case of positive characteristic, to the Zassenhaus algebra.
\end{conjecture}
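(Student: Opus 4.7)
The plan is to reduce the conjecture to a case-by-case analysis using the classification of simple finite-dimensional Lie algebras, and to adapt the techniques of \S\ref{sec-sim}. First, since the Hom-Jacobi identity is linear in $\varphi$, the formation of $\HomLie$ commutes with extension of the ground field, so one may assume $K$ algebraically closed. In characteristic zero the classification reduces everything to classical simple algebras, and Theorem \ref{th-s} together with Proposition \ref{prop-sl2} already yields the statement, the only nontrivial case being $\Sl(2)$. The substance of the conjecture therefore lies in positive characteristic.

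In characteristic $p > 3$, by the Block--Wilson--Strade--Premet classification, every simple finite-dimensional Lie algebra is of classical, Cartan ($W$, $S$, $H$, $K$), or (for $p=5$) Melikyan type. For classical-type algebras of rank $\ge 2$ the first proof of Theorem \ref{th-s} transfers verbatim, since it uses only two features of the root system: that roots can be separated by hyperplanes inside $H^*$, and that $[e_\alpha, e_\beta] \ne 0$ whenever $\alpha + \beta \in R$. Both hold in any characteristic $> 3$, so $\HomLie(\mathfrak g) = K \id_{\mathfrak g}$ for these algebras, with the rank-one case giving the advertised $3$-dimensional exception.

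For Cartan type algebras I would exploit the standard torus grading and the decomposition (\ref{hom-lie-root}). For $W(n;\underline m)$, $S(n;\underline m)$, $H(n;\underline m)$, $K(n;\underline m)$ with $n \ge 2$, the standard maximal torus has dimension $\ge 2$ and separates the occurring weights, so the rigidity argument of the first proof of Theorem \ref{th-s} should force $\HomLie_\sigma = 0$ for $\sigma \ne 0$; the zero-weight part could then be pinned down by known descriptions of quasiderivations of these algebras, fed into the second proof through Lemmas \ref{lemma-t} and \ref{lemma-seq}. The Zassenhaus algebra $W(1;\underline m)$ is precisely the rank-one case where root-hyperplane separation fails, playing for Cartan type the role that $\Sl(2)$ plays for classical type; its Hom-Lie structures are computed in \cite{xie-liu} and indeed form a larger module.

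The main obstacle is the non-restricted Cartan type algebras with $|\underline m| > n$, together with the Melikyan algebras. For these the torus still has dimension $n$ but the grading runs over a larger group, so the rigidity argument does not automatically kill all the higher graded components, and an analogue of \cite[Corollary 4.14]{leger-luks} describing $\QDer$ is not available in the literature in a form sharp enough to invoke Lemma \ref{lemma-seq}. A further subtlety is that the Killing form used in Lemma \ref{lemma-t} is degenerate for $S$, $H$, $K$ and Melikyan algebras, so one must first identify an appropriate nondegenerate invariant bilinear form (which exists, but is not the Killing form) before Lemma \ref{lemma-t} can be applied. Overcoming these two points -- a uniform description of quasiderivations and a uniform choice of invariant form -- is, in my view, where the real work of the conjecture lies.
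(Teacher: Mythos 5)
The statement you are trying to prove is stated in the paper as a \emph{conjecture}: the authors offer no proof, only the remark that it is suggested by the cases computed so far. So there is no argument of theirs to compare yours against, and the question is simply whether your proposal closes the conjecture. It does not, and you say so yourself: for the Cartan type algebras the killing of the nonzero-weight components is only asserted to ``should force'' vanishing, the zero-weight component is deferred to descriptions of $\QDer$ that you acknowledge are not available in the literature in the required form, and the non-restricted Cartan type algebras and the Melikyan algebras are explicitly left open. Those unhandled cases are precisely the content of the conjecture; what you have written is a sensible research program (reduce via the Block--Wilson--Strade--Premet classification, reuse the two proofs of Theorem \ref{th-s}, isolate rank one as the exceptional case), not a proof.

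Two further points need attention. First, Theorem \ref{th-s} is already stated and proved in \S\ref{sec-sim} for classical type over any field of characteristic $\ne 2,3$, so nothing needs to ``transfer verbatim'' there; on the other hand, \cite{xie-liu} treats infinite-dimensional characteristic-zero algebras (the two-sided Witt algebra, Cartan type, loop algebras) and does not compute $\HomLie$ of the Zassenhaus algebra, so that citation does not support your claim about the rank-one Cartan case. Second, your opening reduction to an algebraically closed ground field is not innocent: if $L$ is simple over $K$ but its centroid is a proper field extension of $K$ (e.g.\ $\Sl(2,\mathbb C)$ viewed as a $6$-dimensional simple real Lie algebra), then multiplication by a centroid element is a Hom-Lie structure on $L$ that is not a $K$-multiple of $\id_L$, while $L \otimes_K \overline{K}$ is no longer simple. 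So either the conjecture must be read over algebraically closed fields (or for central simple algebras), or the descent step has to be argued separately; as written, your first sentence glosses over a genuine issue with the very formulation of the statement.
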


If true, this conjecture will add to numerous characterizations of $\Sl(2)$ and
the Zassenhaus algebra among simple Lie algebras, see \cite[\S 2]{comm2} and 
references therein.

\section{Tensor product Lie algebras}\label{sec-tens}

In \cite[\S 4]{vavilov-fest}, Hom-Lie structures on current Lie algebras, i.e. 
Lie algebras of the form $L \otimes A$, where $L$ is a Lie algebra, and $A$ is 
an associative commutative algebra, subject to multiplication
$$
[x \otimes a , y \otimes b] = [x,y] \otimes ab ,
$$
where $x,y \in L$, $a,b \in A$, were described in terms of tensor factors $L$ 
and $A$ (subject to technical assumption that one of $L$, $A$ is 
finite-dimensional, and that $A$ contains a unit):
\begin{equation}\label{eq-la}
\HomLie (L\otimes A) \simeq \HomLie(L) \otimes A + 
\set{\varphi\in \End(L)}{[[L,L],\varphi(L)] = 0} \otimes \End(A)
\end{equation}
(a particular case of Lie algebras of the form 
$\mathfrak g \otimes \mathbb C[t,t^{-1}]$ was done, via direct computations, 
also in \cite{chin2}).

Current Lie algebras are particular case of the following construction, known
from the operadic theory: if $A$ is an algebra over a binary quadratic operad
$\mathscr P$, $B$ an algebra over the operad $\mathscr P^!$ Koszul dual to 
$\mathscr P$, then the tensor product $A \otimes B$ is a Lie algebra subject to
multiplication defined by
$$
[a \otimes b, a^\prime \otimes b^\prime] = 
aa^\prime \otimes bb^\prime - a^\prime a \otimes b^\prime b ,
$$
where $a,a^\prime \in A$, $b,b^\prime \in B$. This Lie algebra will be denoted
by $(A \otimes B)^{(-)}$.

An interesting question is to describe $\HomLie\big((A \otimes B)^{(-)}\big)$ in
terms of some invariants of $A$ and $B$ in the case of arbitrary pair of Koszul
dual binary quadratic operads, similarly to (\ref{eq-la}), which corresponds to
the pair of operads (Lie, associative commutative). If such description would be
possible, it will give at once description of Hom-Lie structures on many 
infinite-dimensional Lie algebras of physical origin (Poisson brackets of 
hydrodynamic type, Heisenberg--Virasoro, Schr\"odinger--Virasoro, etc.), as they can be represented as Lie algebras of 
the form $(A \otimes B)^{(-)}$ for the pair of operads 
(left Novikov, right Novikov) -- see \cite[\S 5]{vavilov-fest} and references 
therein, as well as for Lie algebras $\gl_n(A)$ of matrices over an associative
algebra $A$ -- what corresponds to the self-dual pair of operads 
(associative, associative).

However, we are able to answer this question only in a particular case, where
one of the operads is commutative, and another one is anticommutative. To 
formulate the result, we need to extend the notion of Hom-Lie structure. 
Recently, it became popular to study Hom-algebras attached to various varieties
of (nonassociative) algebras, by ``twisting'' identities defining the 
respective varieties, similarly how Hom-Jacobi identity is a twisted variant of
the ordinary Jacobi identity (see, for example, \cite{paradigm} and references
therein). Following this trend, we will be interested, besides Hom-Lie algebras,
in Hom-cyclic algebras and Hom-2-nilpotent algebras, corresponding to the 
variety of cyclic algebras defined by the identity
$$
(ab)c = (ca)b ,
$$
and the variety of $2$-nilpotent algebras defined by the identity
$$
(ab)c = 0 .
$$

The corresponding ``twisted'' identities are Hom-cyclic:
\begin{equation}\label{eq-hc}
(ab)\varphi(c) = (ca)\varphi(b) ,
\end{equation}
and Hom-$2$-nilpotent:
\begin{equation}\label{eq-2n}
(ab)\varphi(c) = 0 .
\end{equation}

However, we are shifting the emphasis a bit and consider the corresponding 
Hom-structures on an arbitrary (nonassociative) algebra $A$. The vector spaces 
of the corresponding Hom-structures, that is, all linear maps $\varphi: A \to A$
satisfying the Hom-Jacobi identity
$$
(ab)\varphi(c) + (ca)\varphi(b) + (bc)\varphi(a) = 0 ,
$$
or the Hom-cyclic identity (\ref{eq-hc}), or the Hom-$2$-nilpotent identity 
(\ref{eq-2n}), will be denoted by $\HomLie(A)$, $\HomCycl(A)$, or 
$\HomtwoNilp(A)$, respectively.

\begin{theorem}\label{th-oper}
Let $\mathscr P$ be a commutative binary quadratic operad, $A$ an algebra over 
$\mathscr P$, $B$ an algebra over $\mathscr P^!$, and one of $A$, $B$ is 
finite-dimensional. Then
\begin{align}\label{eq-ab}
\HomLie\big((A \otimes B)^{(-)}\big) \>\simeq\> 
    &\HomLie(A) \otimes \HomCycl(B) + \HomtwoNilp(A) \otimes \End(B) \\
+\> &\HomCycl(A) \otimes \HomLie(B) + \End(A) \otimes \HomtwoNilp(B) . \notag
\end{align}
\end{theorem}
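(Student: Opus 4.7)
The plan is to prove the two inclusions separately.

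For the inclusion ``$\supseteq$'' I would use commutativity of $A$ and anti-commutativity of $B$ (which hold because $\mathscr P$ and $\mathscr P^!$ are Koszul dual) to reduce the Lie bracket to $[a\otimes b, a'\otimes b'] = 2\, aa'\otimes bb'$, so the Hom-Jacobi identity for a tensor $\psi\otimes\chi$ reads
$$
\sum_{\sigma\in C_3}(a_{\sigma(1)}a_{\sigma(2)})\psi(a_{\sigma(3)}) \otimes (b_{\sigma(1)}b_{\sigma(2)})\chi(b_{\sigma(3)}) = 0 .
$$
For $\psi\in\HomLie(A)$ and $\chi\in\HomCycl(B)$ I would invoke Hom-cyclicity of $\chi$ to force all three $B$-factors to coincide, factor them out, and use Hom-Lie of $\psi$ to kill the sum of the $A$-factors. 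The case $\psi\in\HomCycl(A),\chi\in\HomLie(B)$ is symmetric; and if either factor is Hom-$2$-nilpotent, every summand vanishes individually.

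For the more substantial ``$\subseteq$'' direction: since one of $A,B$ is finite-dimensional, $\End(A\otimes B)=\End(A)\otimes\End(B)$, so any Hom-Lie structure may be written $\varphi=\sum_k\psi_k\otimes\chi_k$ with $\{\chi_k\}$ linearly independent. Let $P_i(\psi)$ (respectively $Q_i(\chi)$) denote the trilinear map arising in the Hom-Jacobi computation in which $\psi$ (respectively $\chi$) is applied to the $i$-th argument --- so $P_3(\psi)(a_1,a_2,a_3)=(a_1a_2)\psi(a_3)$, $Q_3(\chi)(b_1,b_2,b_3)=(b_1b_2)\chi(b_3)$, and cyclically. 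Then the Hom-Jacobi identity translates to the tensor equation
$$
\sum_k \sum_{i=1}^{3} P_i(\psi_k)\otimes Q_i(\chi_k) = 0
$$
in $\Hom(A^{\otimes 3},A)\otimes\Hom(B^{\otimes 3},B)$. Using commutativity of $A$, the three Hom-conditions receive the clean reformulations: $\psi\in\HomLie(A)$ iff $P_1(\psi)+P_2(\psi)+P_3(\psi)=0$; $\psi\in\HomCycl(A)$ iff $P_1(\psi)=P_2(\psi)=P_3(\psi)$; and $\psi\in\HomtwoNilp(A)$ iff $P_i(\psi)=0$ for all $i$, the last being the intersection of the former two when the characteristic is $\ne 3$. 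Analogous statements hold on $B$ via anti-commutativity.

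The plan from here is a decomposition argument. The absorbing summands $\HomtwoNilp(A)\otimes\End(B)$ and $\End(A)\otimes\HomtwoNilp(B)$ of the right-hand side of (\ref{eq-ab}) allow me to replace $\varphi$ by its class modulo these, and it then remains to show that the residue lies in $\HomLie(A)\otimes\HomCycl(B)+\HomCycl(A)\otimes\HomLie(B)$. I would extract information from the tensor identity by several complementary specializations: setting $a_1=a_2=a_3=a$ and polarizing yields $\sum_k J^A(\psi_k)\otimes J^B(\chi_k)=0$ (where $J^A, J^B$ denote the Hom-Jacobi violations on the respective factors); setting $b_1=b_2$ exploits $b\cdot b=0$ in $B$ to extract a relation involving only the differences $P_i(\psi_k)-P_j(\psi_k)$; and symmetric specializations on $B$ give further relations. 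Combined with the linear independence of $\{\chi_k\}$, these should force the ``off-diagonal'' components of $\varphi$ --- those lying in neither of the two remaining summands --- to vanish. The main obstacle I anticipate is the combinatorial bookkeeping: coordinating enough specializations to rule out every such off-diagonal contribution, tracking the $C_3$-symmetry permuting the three slots, and making essential use of the equality $\HomtwoNilp=\HomLie\cap\HomCycl$ to keep the number of cases manageable.
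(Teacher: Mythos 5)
Your setup and your verification of the inclusion ``$\supseteq$'' are essentially identical to the paper's argument: you write $\varphi=\sum_k\psi_k\otimes\chi_k$, translate the Hom-Jacobi identity into $\sum_k\sum_i P_i(\psi_k)\otimes Q_i(\chi_k)=0$, and your two specializations (setting the three $A$-arguments equal and polarizing; setting two $B$-arguments equal and using $b\cdot b=0$) are exactly the paper's cyclic symmetrization and skew-symmetrization, yielding
$\sum_k J^A(\psi_k)\otimes J^B(\chi_k)=0$ and $\sum_k\bigl(P_1-P_2\bigr)(\psi_k)\otimes\bigl(Q_1-Q_2\bigr)(\chi_k)=0$.
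Your reformulations of the three Hom-conditions in terms of the $P_i$, and the identification $\HomtwoNilp=\HomLie\cap\HomCycl$ in characteristic $\ne 3$, are also correct and are used in the same way in the paper.

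The gap is the step you yourself flag as ``the main obstacle'': passing from these two tensor identities to the four-term decomposition of $\varphi$. Linear independence of $\{\chi_k\}$ does not suffice, because from $\sum_k J^A(\psi_k)\otimes J^B(\chi_k)=0$ one cannot conclude that each summand vanishes --- the images $J^B(\chi_k)$ need not be independent, and in general no summand of the \emph{given} representation need lie in any single term of the right-hand side of (\ref{eq-ab}). What is needed is a change of representation of the tensor $\varphi$: a linear-algebra lemma asserting that if an element $\sum_k u_k\otimes v_k$ of $U\otimes V$ is annihilated by two maps of the form $f\otimes g$ and $f'\otimes g'$, then it admits a new representation whose index set splits into four parts, on each of which one factor of the first condition and one factor of the second condition vanish. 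This is precisely \cite[Lemma~1.1]{low}, which the paper invokes; applied to your two identities it produces exactly the four summands of (\ref{eq-ab}) (with $\HomLie\cap\HomCycl=\HomtwoNilp$ absorbing the two ``diagonal'' parts into $\HomtwoNilp(A)\otimes\End(B)$ and $\End(A)\otimes\HomtwoNilp(B)$). Your alternative idea of quotienting by the absorbing summands first does not obviously interact well with the tensor identity and is not developed; without the decomposition lemma (or an equivalent argument) the proof is incomplete. Everything else in your proposal is sound.
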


\begin{proof}
The proof follows closely the case of current Lie algebras treated in 
\cite[Theorem 4]{vavilov-fest}. An arbitrary linear map 
$\Phi: A \otimes B \to A \otimes B$ can be written in the form
\begin{equation*}
\Phi(x \otimes a) = \sum_{i\in I} \alpha_i(x) \otimes \beta_i(a) ,
\end{equation*}
where $x \in A$, $a \in B$, and $\alpha_i: A \to A$, $\beta_i: B \to B$ is a 
(finite) family of linear maps, indexed by a set $I$. Note that since $A$ is
commutative, $B$ is anticommutative, and the Hom-Jacobi identity for $\Phi$ 
takes the form
\begin{equation}\label{eq-hj}
\sum_{i\in I} \Big(
  (xy)\alpha_i(z) \otimes (ab)\beta_i(c)
+ (zx)\alpha_i(y) \otimes (ca)\beta_i(b) 
+ (yz)\alpha_i(x) \otimes (bc)\beta_i(a)
\Big) = 0
\end{equation}
for any $x,y,z \in A$, $a,b,c \in B$.

Cyclically permuting in this equality $x,y,z$, and summing up the obtained $3$
equalities, we get:
\begin{equation}\label{eq-1}
\sum_{i\in I} 
\Big((xy)\alpha_i(z) + (zx)\alpha_i(y) + (yz)\alpha_i(x) \Big) 
\otimes 
\Big((ab)\beta_i(c) + (ca)\beta_i(b) + (bc)\beta_i(a)\Big) = 0 .
\end{equation}

Skew-symmetrizing the equality (\ref{eq-hj}) with respect to $x,y$, we get:
\begin{equation}\label{eq-2}
\sum_{i\in I} 
\Big((yz)\alpha_i(x) - (zx)\alpha_i(y)\Big) 
\otimes 
\Big((bc)\beta_i(a) - (ca)\beta_i(b)\Big) = 0 .
\end{equation}

Applying \cite[Lemma 1.1]{low} to the last two equalities, we may assume that 
the indexing set is partitioned into the four subsets: 
$I = I_{11} \cup I_{12} \cup I_{21} \cup I_{22}$, each of them is characterized
by the condition of vanishing of one of the tensor factors in (\ref{eq-1}),
and of one of the tensor factors in (\ref{eq-2}). Noting that these vanishing
conditions are exactly the Hom-Jacobi identities in the case of (\ref{eq-1}), 
and Hom-cyclic identities in the case of (\ref{eq-2}), we may write:
$$
\begin{array}{lll}
\alpha_i \in \HomLie(A) ; & \alpha_i \in \HomCycl(A) & \text{ for } i \in I_{11}
\\
\alpha_i \in \HomLie(A) ; & \beta_i \in \HomCycl(B) & \text{ for } i \in I_{12}
\\
\alpha_i \in \HomCycl(A) ; & \beta_i \in \HomLie(B) & \text{ for } i \in I_{21}
\\
\beta_i \in \HomLie(B) ; & \beta_i \in \HomCycl(B)  & \text{ for } i \in I_{22} 
.
\end{array}
$$

But for any algebra $A$, 
$$
\HomLie(A) \cap \HomCycl(A) = \HomtwoNilp(A) .
$$
It is easy to see then that each of the decomposable maps 
$\alpha_i \otimes \beta_i$ for 
$i \in I_{11}$, $i\in I_{12}$, $i\in I_{21}$, $i\in I_{22}$ satisfies the 
Hom-Jacobi identity (\ref{eq-hj}), and the statement of the theorem follows.
\end{proof}

The isomorphism (\ref{eq-la}) is a particular case of (\ref{eq-ab}): indeed, in
the class of associative commutative algebras with unit, Hom-cyclic structures
are exactly multiplications by an element of the algebra, and Hom-$2$-nilpotent
structures vanish.

\section{Untwisted Kac--Moody algebras}\label{sec-km}

We employ the well-known realization of affine Kac--Moody algebras as untwisted 
and twisted extensions of current algebras of a particular type. In this section
we will treat the untwisted case.

Let $\mathfrak g$ be a finite-dimensional simple complex Lie algebra, and 
$\mathbb C[t,t^{-1}]$ the algebra of Laurent polynomials. Then an untwisted
affine Kac--Moody algebra is the universal central extension (via the famous 
``Kac--Moody $2$-cocycle'') of the current Lie algebra 
$\mathfrak g \otimes \mathbb C[t,t^{-1}]$ extended by the Euler derivation 
$\id_{\mathfrak g} \otimes t \frac{\dcobound}{\dcobound t}$. Accordingly, 
starting with a current Lie algebra, we will see how Hom-Lie structures change 
first by extending the current algebra by derivation, and then by a central 
element. Our first task is to determine Hom-Lie structures on Lie algebras of 
the form 
$$
\big(\mathfrak g \otimes \mathbb C[t,t^{-1}]\big) \oplus
\big(\mathbb C \id_{\mathfrak g} \otimes t \frac{\dcobound}{\dcobound t}\big)
$$
(the direct sum is understood here in the category of vector spaces; in the 
category of Lie algebras, this is a semidirect sum). We will treat a slightly 
more general case: semidirect sums of the form 
$\big(L \otimes A\big) \oplus \big(K\id_L \otimes D\big)$, where $D$ acts on 
$L \otimes A$ by a derivation of the second tensor factor $A$.

\begin{proposition}\label{th-extd}
Let $L$ be a Lie algebra satisfying the following conditions:
\begin{enumerate}[\upshape(i)]
\item $L$ does not have abelian subalgebras of codimension $1$;
\item the center of $L$ is zero;
\item $\set{x\in L}{[[L,L],x] = 0} = 0$.
\end{enumerate}
Let $A$ be an associative commutative algebra with unit and without zero 
divisors, one of $L$, $A$ is finite-dimensional, and $D$ a derivation of $A$ 
such that $\dim D(A) > 1$. Then 
$$
\HomLie \Big(\big(L \otimes A\big) \oplus \big(K\id_L \otimes D\big)\Big) =
K \id_{(L \otimes A) \oplus (K\id_L \otimes D)} .
$$
\end{proposition}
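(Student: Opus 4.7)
I would write an arbitrary Hom-Lie structure $\Phi$ on the semidirect sum
$M := (L\otimes A)\oplus K d$, where $d := \id_L\otimes D$, in block form:
\[
\Phi(u) = \widetilde\Phi(u) + \mu(u)\,d \quad (u\in L\otimes A),\qquad
\Phi(d) = \xi + c\,d,
\]
with $\widetilde\Phi\in\End(L\otimes A)$, $\mu\in(L\otimes A)^{*}$, $\xi\in L\otimes A$, $c\in K$. Since $[M,M]\subseteq L\otimes A$, every Hom-Jacobi identity for $\Phi$ lives in $L\otimes A$. The plan is to show in turn that $\mu=0$, then $\xi=0$, and finally $\widetilde\Phi = c\,\id_{L\otimes A}$, from which $\Phi = c\,\id_M$ follows at once.

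\emph{Step 1: $\mu = 0$.} Applying Hom-Jacobi to triples $(u,v,w)\in(L\otimes A)^{3}$ and writing $D^{*} := \id_L\otimes D$ yields
\[
\sum_{\mathrm{cyc}}[[u,v],\widetilde\Phi(w)] = \sum_{\mathrm{cyc}}\mu(w)\,D^{*}([u,v]).
\]
Specialising $u,v,w$ to pure tensors and playing off $\dim D(A)>1$ against the absence of zero divisors in $A$, the $\mu$-contribution (whose second tensor factor always lies in $D(A)$, by the Leibniz rule for $D$) can be isolated from the tensor decomposition of $\widetilde\Phi$ and forced to vanish. This is the most delicate step of the argument; conditions (i) and (ii), together with $\dim D(A)>1$, are used precisely here.

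\emph{Step 2: shape of $\widetilde\Phi$.} With $\mu=0$, the displayed identity becomes the ordinary Hom-Jacobi identity for $\widetilde\Phi$ on the current Lie algebra $L\otimes A$, so $\widetilde\Phi\in\HomLie(L\otimes A)$. Hypothesis (iii) kills the second summand of (\ref{eq-la}) (the only $\varphi\in\End(L)$ with $[[L,L],\varphi(L)]=0$ is $\varphi=0$), so
\[
\widetilde\Phi = \sum_i \psi_i\otimes \ell_{a_i}, \qquad \psi_i\in\HomLie(L),\ a_i\in A,
\]
with $\ell_{a_i}$ denoting multiplication by $a_i$ in $A$ and the $a_i$ chosen linearly independent.

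\emph{Step 3: $\xi=0$ and identification of $\widetilde\Phi$.} Applying Hom-Jacobi to triples $(u,v,d)$, with $\mu=0$ and the shape of $\widetilde\Phi$ now known, produces
\[
[[u,v],\xi] - c\,D^{*}([u,v]) + [D^{*}(u),\widetilde\Phi(v)] - [D^{*}(v),\widetilde\Phi(u)] = 0.
\]
Setting $a=b=1$ in $u = x\otimes a,\ v = y\otimes b$ kills every $D^{*}$-term (as $D(1)=0$), and writing $\xi = \sum_j z_j\otimes c_j$ with linearly independent $c_j$'s, we extract $[[x,y],z_j]=0$ for all $x,y\in L$; hypothesis (iii) then forces $\xi=0$. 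Next, setting $a=a_0$ with $D(a_0)\ne 0$ and $b=1$, the integral-domain property of $A$ allows cancellation of $D(a_0)$ in the $A$-factor, reducing the identity to
\[
\sum_i [x,\psi_i(y)]\otimes a_i = c\,[x,y]\otimes 1 \qquad (x,y\in L).
\]
Comparing $a_i$-coefficients and using (ii), the index with $a_i=1$ (after extending $\{a_i\}$ by $1$ if necessary) contributes $\psi_i = c\,\id_L$, while all other $\psi_i$ vanish. Hence $\widetilde\Phi = c\,\id_{L\otimes A}$ and $\Phi = c\,\id_M$, as required.
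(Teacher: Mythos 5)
Your overall architecture coincides with the paper's: first kill the $d$-component of $\Phi|_{L\otimes A}$, then reduce to a Hom-Lie structure on the current algebra $L\otimes A$ and apply (\ref{eq-la}) together with hypothesis (iii), and finally use the triples $(u,v,d)$ to pin down $\Phi(d)$ and force each $\psi_i$ to be a scalar multiple of $\id_L$. Your Steps 2 and 3 are essentially the paper's argument and are correct as written: condition (iii) does kill both $\HomtwoNilp(L)$ and the $z_j$'s in $\xi$, the cancellation of $D(a_0)$ is legitimate because $A$ is a domain (so the family $\{D(a_0)a_i\}$ stays linearly independent), and condition (ii) lets you pass from $[x,\psi_i(y)]=0$ for all $x$ to $\psi_i=0$.

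The problem is Step 1, which you yourself call ``the most delicate step'' and then do not carry out; as it stands this is a genuine gap, and the mechanism you hint at would not work. You propose to ``isolate'' the $\mu$-contribution because its second tensor factor lies in $D(A)$; but by Leibniz $D(ab)=aD(b)+bD(a)$ lies in $AD(A)$ rather than $D(A)$, and in any case the second tensor factors coming from $\widetilde\Phi$ have the form $ab\,\alpha_i(c)$ and sweep out all of $A$, so there is no subspace of $A$ along which the two contributions separate. What is actually needed is: (a) cyclically symmetrize the Hom-Jacobi identity in $u,v,w$ so that each summand becomes a product of a Jacobi-type expression in $L$ and one in $A$; (b) invoke the dichotomy for vanishing sums of decomposable tensors (\cite[Lemma 1.1]{low}): either all first tensor factors vanish, or a nontrivial linear combination of the second factors vanishes; (c) rule out the second alternative by the explicit substitutions $c=1$, $a=b=1$, $b=1$, which give $\alpha(1)=0$ and $\alpha=-2\beta(1)D$, then $\bigl(\beta(a)1-\beta(1)a\bigr)D(a)=0$ forces $\beta(1)=0$ since $A$ is a unital domain, and finally $\beta(b)D(a)+\beta(a)D(b)=0$ contradicts $\dim D(A)>1$ unless $\beta=0$; (d) only after the first tensor factors are known to vanish does condition (i) enter, to show that a nonzero functional $\lambda_i$ satisfying $\lambda_i(z)[x,y]+\lambda_i(y)[z,x]+\lambda_i(x)[y,z]=0$ would make $\Ker\lambda_i$ an abelian subalgebra of codimension $1$. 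Note also that your attribution of hypotheses is off: (ii) plays no role in Step 1, and the heavy lifting in (c) is done by the assumptions on $A$ and $D$, not by (i) and (ii). Until (a)--(d) are supplied, the proof is incomplete exactly where the proposition's hypotheses do their work.
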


\begin{proof}
The proof is based on the proof of \cite[Theorem 4]{vavilov-fest} (or the proof
of a more general Theorem \ref{th-oper}).

The restriction of a Hom-Lie structure $\Phi$ on 
$\big(L\otimes A\big) \oplus \big(K\id_L \otimes D\big)$ to $L \otimes A$ can 
be written as
\begin{equation*}
\Phi(x \otimes a) = \sum_{i\in I} \varphi_i(x) \otimes \alpha_i(a) + 
\sum_{i \in J} \lambda_i(x) \beta_i(a) \id_L \otimes D ,
\end{equation*}
where $x \in L$ and $a \in A$, for some linear maps $\varphi_i: L \to L$, 
$\alpha_i: A \to A$, $\lambda_i: L \to K$, $\beta_i: A \to K$.

The Hom-Jacobi identity for triple $x \otimes a, y \otimes b, z \otimes c$ then
reads
\begin{multline*}
\sum_{i\in I} 
\Big(
  [[x,y],\varphi_i(z)] \otimes ab\alpha_i(c)
+ [[z,x],\varphi_i(y)] \otimes ca\alpha_i(b) 
+ [[y,z],\varphi_i(x)] \otimes bc\alpha_i(a)
\Big)  
\\
+ \sum_{i\in J} 
\Big(
  \lambda_i(z)[x,y] \otimes \beta_i(c)D(ab)
+ \lambda_i(y)[z,x] \otimes \beta_i(b)D(ca)
+ \lambda_i(x)[y,z] \otimes \beta_i(a)D(bc)
\Big)
= 0
\end{multline*}
for any $x,y,z\in L$, $a,b,c\in A$.

Cyclically permuting in this equality $x,y,z$, and summing up the obtained $3$ 
equalities, we get:
\begin{multline}\label{eq-c}
\sum_{i\in I} 
\Big([[x,y],\varphi_i(z)] + [[z,x],\varphi_i(y)] + [[y,z],\varphi_i(x)] \Big)
\otimes 
\Big(ab\alpha_i(c) + ca\alpha_i(b) + bc\alpha_i(a)\Big)
\\
+ \sum_{i\in J} 
\Big(\lambda_i(z)[x,y] + \lambda_i(y)[z,x] + \lambda_i(x)[y,z]\Big) \otimes  
\Big(\beta_i(c)D(ab) + \beta_i(b)D(ca) + \beta_i(a)D(bc) \Big)
= 0 .
\end{multline}

In the sum at the left-hand side of the last equality, either all the first 
tensor factors vanish, or a nontrivial linear combination of the second tensor 
factors vanishes. If the second possibility holds, then
\begin{equation*}
  ab\alpha(c) + ca\alpha(b) + bc\alpha(a) 
+ \beta(c)D(ab) + \beta(b)D(ca) + \beta(a)D(bc) = 0
\end{equation*}
for any $a,b,c\in A$, where $\alpha$ is a linear combination of $\alpha_i$'s and
$\beta$ is a linear combination of $\beta_i$'s, at least one of them is 
nontrivial.

In the last equality, expand the expressions containing $D$ of the product of
two elements via the Leibniz formula, and substitute $c=1$:
\begin{equation}\label{eq-abc}
  ab\alpha(1) + a\alpha(b) + b\alpha(a) 
+ \Big(\beta(b)1 + \beta(1)b\Big)D(a) 
+ \Big(\beta(a)1 + \beta(1)a\Big)D(b) 
= 0 .
\end{equation}

Substituting here further $a=b=1$, we get $\alpha(1) = 0$. Substituting then 
$b=1$, we get $\alpha = -2\beta(1)D$, and substituting this back to 
(\ref{eq-abc}), we get
\begin{equation}\label{eq-ab1}
\Big(\beta(b)1 - \beta(1)b\Big)D(a) + \Big(\beta(a)1 - \beta(1)a\Big)D(b) = 0 .
\end{equation}

Assuming in the last equality $a=b$, we get
$$
\Big(\beta(a)1 - \beta(1)a\Big)D(a) = 0 .
$$

Since $A$ is without zero divisors, $\beta(a)1 = \beta(1)a$ for any 
$a \notin \Ker D$. If $\beta(1) \ne 0$, this means that any $a \notin \Ker D$ is
a scalar multiple of $1$, a contradiction. Hence $\beta(1) = 0$, $\alpha = 0$, 
and the equality (\ref{eq-ab1}) takes the form
$$
\beta(b)D(a) + \beta(a)D(b) = 0 .
$$

If there is $b\in A$ such that $\beta(b) \ne 0$, then for any $a\in A$, $D(a)$ 
is a scalar multiple of $D(b)$, a contradiction with the condition 
$\dim D(A) > 1$. Hence both $\alpha$ and $\beta$ vanish, again a contradiction.

Consequently, each first tensor factor in the sums at the left-hand side of 
(\ref{eq-c}) vanishes; that is, $\varphi_i \in \HomLie(L)$ for any $i \in I$, 
and
\begin{equation*}
\lambda_i(z)[x,y] + \lambda_i(y)[z,x] + \lambda_i(x)[y,z] = 0
\end{equation*}
for any $x,y,z\in L$ and $i\in J$. Assuming in the latter equality 
$x,y \in \Ker \lambda_i$, we get %\linebreak
$\lambda_i(L)[\Ker \lambda_i, \Ker \lambda_i] = 0$. If $\lambda_i \ne 0$, then
$\Ker \lambda_i$ is of codimension $1$ in $L$, so it cannot be an abelian 
subalgebra, i.e. $[\Ker \lambda_i, \Ker \lambda_i]$ is not zero, and 
$\lambda_i = 0$, a contradiction. 

Thus $\lambda_i = 0$ for any $i \in J$, 
$\Phi(L \otimes A) \subseteq L \otimes A$, and the restriction of $\Phi$ to
$L \otimes A$ is a Hom-Lie structure on $L \otimes A$. Note that the condition
(iii) on $L$ in the statement of the proposition implies $\HomtwoNilp(L) = 0$, 
and thus by \cite[Theorem 4]{vavilov-fest} (formula (\ref{eq-la}) here; or by a
more general Theorem \ref{th-oper}) we have
$$
\Phi (x \otimes a) = \sum_{i\in I} \varphi_i(x) \otimes au_i ,
$$
where $x\in L$, $a\in A$, for suitable $\varphi_i \in \HomLie(L)$ and 
$u_i \in A$. Write
\begin{equation*}
\Phi(\id_L \otimes D) = \sum_{i\in K} t_i \otimes v_i + \lambda \id_L \otimes D
\end{equation*}
for some linearly independent elements $t_i \in L$, linearly independent 
elements $v_i \in A$, and $\lambda \in K$. Then the Hom-Jacobi 
identity for triple $x \otimes a$, $y \otimes b$, $\id_L \otimes D$ is 
equivalent to
\begin{equation}\label{eq-da}
  \sum_{i\in K} [[x,y],t_i] \otimes abv_i 
+ \lambda[x,y] \otimes D(ab)
+ \sum_{i\in I} 
\Big([y,\varphi_i(x)] \otimes aD(b)u_i - [x,\varphi_i(y)] \otimes D(a)bu_i\Big)
= 0
\end{equation}
for any $x,y\in L$ and $a,b\in A$.

Substituting here $a=b=1$ yields
$$
\sum_{i\in K} [[x,y],t_i] \otimes v_i = 0 .
$$

Since $v_i$'s are linearly independent, $[[x,y],t_i] = 0$ and hence $t_i = 0$ 
for any $i\in K$. Taking in the remainder of equality (\ref{eq-da}) $b=1$, we 
get:
\begin{equation*}
\lambda[x,y] \otimes D(a) - \sum_{i\in I} [x,\varphi_i(y)] \otimes D(a)u_i
= 0 .
\end{equation*}

Consider the quotient of the last equality in the quotient vector space
$\End(L)/K\id_L \otimes \End(A)$:
\begin{equation*}
\sum_{i\in I} [x,\overline\varphi_i(y)] \otimes D(a)u_i = 0 ,
\end{equation*}
where $\overline\varphi_i$ is the equivalence class of $\varphi_i$. Since $A$ is
without zero divisors, the vanishing of the second tensor factors here implies 
$u_i = 0$, hence all the first tensor factor vanish. Since the center of $L$ is
zero, this implies $\overline\varphi_i = 0$, i.e. $\varphi_i$ is a scalar 
multiple of $\id_L$ for each $i\in I$. Consequently, 
$\Phi(x \otimes a) = x \otimes au$ for some $u\in A$, and the Hom-Jacobi 
identity (\ref{eq-da}) reduces to
$$
[x,y] \otimes D(ab) (\lambda 1 - u) = 0 .
$$
Then again, since $A$ is without zero divisors, $u = \lambda 1$, and $\Phi$ is
a scalar multiple of $\id_{(L \otimes A) \oplus (\id_L \otimes D)}$.
\end{proof}

Now we will use this result to determine Hom-Lie structures on an affine 
untwisted Kac--Moody algebra
$$
\big(\mathfrak g \otimes \mathbb C[t,t^{-1}]\big) \oplus
\big(\mathbb C \id_{\mathfrak g} \otimes t \frac{\dcobound}{\dcobound t}\big)
\oplus \mathbb Cz ,
$$
where $z$ is the central element, and multiplication on 
$\mathfrak g \otimes \mathbb C[t,t^{-1}]$ is twisted by the Kac--Moody 
$2$-cocycle, whose concrete form is immaterial for our purposes here.

As far as Hom-Lie structures are concerned, central extensions are dealt with 
easily. First, note that for any Lie algebra $\mathscr L$ with a nonzero center
$\Z(\mathscr L)$, any linear map $\mathscr L \to \Z(\mathscr L)$ is a Hom-Lie 
structure on $\mathscr L$. Such Hom-Lie structures will be called 
\emph{central}.

Now let $\widehat{\mathscr L}$ be a one-dimensional central extension of a Lie
algebra $\mathscr L$ with the Lie bracket $\liebrack$. Write 
$\widehat{\mathscr  L}$ as the vector space direct sum $\mathscr L \oplus Kz$, 
where $z$ is the central element, with multiplication
$$
\{x,y\} = [x,y] + \xi(x,y)z
$$
where $x,y \in \mathscr L$, and $\xi$ is a $2$-cocycle on $\mathscr L$. Writing
a linear map $\varphi: \widehat{\mathscr L} \to \widehat{\mathscr L}$ as 
$\varphi(x) = \psi(x) + \lambda(x)z$ for $x\in \mathscr L$, and 
$\varphi(z) = s + \mu z$, where $\psi \in \End(\mathscr L)$, 
$\lambda \in \mathscr L^*$, $s \in \mathscr L$, and $\mu \in K$, the condition 
that $\varphi$ is a Hom-Lie structure in $\widehat{\mathscr L}$ is equivalent to
the following: $\psi \in \HomLie(\mathscr L)$ with an additional condition
\begin{equation}\label{eq-2c}
\xi([x,y],\psi(t)) + \xi([t,x],\psi(y)) + \xi([y,t],\psi(x))  = 0
\end{equation}
for any $x,y,t \in \mathscr L$, $[[\mathscr L,\mathscr L],s] = 0$, and 
$\xi([\mathscr L,\mathscr L],s) = 0$. (Not surprisingly, the ``Hom-$2$-cocycle''
equation (\ref{eq-2c}) appears in description of central extensions in the category of Hom-Lie algebras, see 
\cite[\S 2.4]{hls}).

\begin{proposition}\label{prop-km}
The space of Hom-Lie structures on an affine untwisted Kac--Moody algebra is 
linearly spanned by central Hom-Lie structures and the identity map.
\end{proposition}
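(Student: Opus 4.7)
The plan is to apply the central extension framework from just before the proposition, with $\mathscr L = (\mathfrak g \otimes \mathbb C[t,t^{-1}]) \oplus \mathbb C(\id_\mathfrak g \otimes t \frac{\dcobound}{\dcobound t})$, so that $\widehat{\mathscr L} = \mathscr L \oplus \mathbb C z$ carries the Kac--Moody $2$-cocycle $\xi$. First I would verify that Proposition \ref{th-extd} applies to $\mathscr L$: any finite-dimensional simple $\mathfrak g$ is perfect and centerless and has no codimension-$1$ abelian subalgebra (the only nonobvious case is $\Sl(2)$, which has no $2$-dimensional abelian subalgebra); $\mathbb C[t,t^{-1}]$ is a unital commutative integral domain; and the Euler derivation $D = t \frac{\dcobound}{\dcobound t}$ satisfies $D(t^n) = n t^n$, so $\dim D(\mathbb C[t,t^{-1}]) = \infty$. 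Consequently $\HomLie(\mathscr L) = \mathbb C \id_{\mathscr L}$.

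By the recipe preceding the proposition, any Hom-Lie structure $\varphi$ on $\widehat{\mathscr L}$ decomposes as $\varphi(x) = \psi(x) + \lambda(x)z$ for $x \in \mathscr L$ and $\varphi(z) = s + \mu z$, subject to: $\psi \in \HomLie(\mathscr L)$; the Hom-$2$-cocycle identity (\ref{eq-2c}) holds for $\xi$ and $\psi$; $[[\mathscr L,\mathscr L],s] = 0$; and $\xi([\mathscr L,\mathscr L],s) = 0$. The first condition forces $\psi = c \id_{\mathscr L}$ for some $c \in \mathbb C$, and then (\ref{eq-2c}) collapses to $c$ times the ordinary $2$-cocycle identity for $\xi$ and is automatic. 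The data $\lambda$ and $\mu$ are otherwise unconstrained.

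It remains to show $s = 0$. Since $\mathfrak g$ is perfect, a direct bracket computation gives $[\mathscr L, \mathscr L] = \mathfrak g \otimes \mathbb C[t,t^{-1}]$: indeed, the derivation $\id_\mathfrak g \otimes t \frac{\dcobound}{\dcobound t}$ never appears as a commutator. Writing $s = s_0 + \nu (\id_\mathfrak g \otimes t \frac{\dcobound}{\dcobound t})$ with $s_0 \in \mathfrak g \otimes \mathbb C[t,t^{-1}]$, the condition $[\mathfrak g \otimes \mathbb C[t,t^{-1}], s] = 0$ puts $s_0$ in the (trivial) center of the current algebra and forces $\nu = 0$, by evaluating the bracket against $\mathfrak g \otimes t^n$ for $n \ne 0$. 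Assembling everything, $\varphi = c \id_{\widehat{\mathscr L}} + \eta$, where $\eta\colon \widehat{\mathscr L} \to \mathbb C z$ is an arbitrary linear map, i.e.\ a central Hom-Lie structure.

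The only nontrivial point in the plan is the identification of $[\mathscr L, \mathscr L]$ and the ensuing elimination of $s$; everything else is bookkeeping layered on top of Proposition \ref{th-extd} and the central extension framework. Note in particular that the explicit shape of the Kac--Moody cocycle $\xi$ plays no role beyond its being a genuine $2$-cocycle.
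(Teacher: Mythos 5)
Your proposal is correct and follows essentially the same route as the paper: reduce to $\mathscr L = (\mathfrak g \otimes \mathbb C[t,t^{-1}]) \oplus \mathbb C(\id_{\mathfrak g} \otimes t \frac{\dcobound}{\dcobound t})$, invoke Proposition \ref{th-extd} to get $\HomLie(\mathscr L) = \mathbb C\id_{\mathscr L}$, note that the Hom-$2$-cocycle condition (\ref{eq-2c}) is then automatic, and kill $s$ by computing $[\mathscr L,\mathscr L] = \mathfrak g \otimes \mathbb C[t,t^{-1}]$ and using that nothing in $\mathscr L$ commutes with it. Your explicit verification of the hypotheses of Proposition \ref{th-extd} is a welcome addition the paper leaves implicit.
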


\begin{proof}
Keeping notation of the discussion above, in our concrete situation 
$$
\mathscr L = \big(\mathfrak g \otimes \mathbb C[t,t^{-1}]\big) \oplus
\big(\mathbb C \id_{\mathfrak g} \otimes t \frac{\dcobound}{\dcobound t}\big) .
$$
By Proposition \ref{th-extd} any Hom-Lie structure $\psi$ on this algebra is a 
scalar multiple of the identity map, and the Hom-$2$-cocycle equation 
(\ref{eq-2c}) is satisfied trivially. Further, 
$[\mathscr L,\mathscr L] = \mathfrak g \otimes \mathbb C[t,t^{-1}]$, and a
straightforward computation shows that there are no nonzero elements 
$s \in \mathscr L$ such that $[[\mathscr L,\mathscr L],s] = 0$. Thus any Hom-Lie
structure on $\widehat{\mathscr L}$ is a linear combination of a scalar 
multiple of $\id_{\widehat{\mathscr L}}$, and a linear map with values in $Kz$.
\end{proof}

\section{Twisted Kac--Moody algebras}\label{sec-tw}

Let 
$\mathfrak g = \bigoplus_{\overline i\in \mathbb Z / n\mathbb Z} \mathfrak g_{\overline i}$ 
be a $\mathbb Z / n\mathbb Z$-grading of a finite-dimensional simple complex 
Lie algebra $\mathfrak g$ (in practice, $n=2$ or $3$). A twisted affine 
Kac--Moody algebra is an extension of the twisted current Lie algebra 
\begin{equation}\label{eq-tw}
\mathscr L(\mathfrak g,n) = 
\bigoplus_{i\in \mathbb Z} \mathfrak g_{i(mod\, n)} \otimes t^i
\end{equation}
by the Euler derivation, and by the central element. 

Like in the untwisted case, we consider first the extension of the twisted 
current Lie algebra by the Euler derivation:
\begin{equation*}
\mathscr L = 
\mathscr L(\mathfrak g,n) \oplus 
\big(\mathbb C\id_{\mathfrak g} \otimes\> t \frac{\dcobound}{\dcobound t}\big) .
\end{equation*}

\begin{proposition}\label{prop-tw}
Let $\mathscr L$ be as above. Then 
$\HomLie(\mathscr L) = \mathbb C\id_{\mathscr L}$.
\end{proposition}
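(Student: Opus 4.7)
My plan is to follow the strategy of Proposition \ref{th-extd} while adapting it to the $\mathbb Z$-graded setting in which $\mathscr L(\mathfrak g, n)$ lives. The Euler derivation $d = \id_{\mathfrak g} \otimes t\frac{\dcobound}{\dcobound t}$ acts ad-diagonalizably on $\mathscr L$ with integer eigenvalues and annihilates itself, so the integer-graded analog of the decomposition (\ref{hom-lie-root}) applies and yields $\HomLie(\mathscr L) = \bigoplus_{k \in \mathbb Z} \HomLie_k(\mathscr L)$. The graded pieces of $\mathscr L$ itself are $\mathscr L_i = \mathfrak g_{\overline i} \otimes t^i$ for $i \neq 0$ and $\mathscr L_0 = \mathfrak g_0 \oplus \mathbb C d$. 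It therefore suffices to prove $\HomLie_k(\mathscr L) = 0$ for every $k \neq 0$ and $\HomLie_0(\mathscr L) = \mathbb C \id_{\mathscr L}$.

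For $k \neq 0$, I would parametrize a degree-$k$ Hom-Lie structure $\Phi$ by linear maps $\varphi_i: \mathfrak g_{\overline i} \to \mathfrak g_{\overline{i+k}}$ for $i \neq 0, -k$, together with the extra data corresponding to the $\mathbb C d$ summand: a map $\mathfrak g_0 \to \mathfrak g_{\overline k}$, a map $\mathfrak g_{\overline{-k}} \to \mathfrak g_0 \oplus \mathbb C d$, and the value $\Phi(d) \in \mathfrak g_{\overline k} \otimes t^k$. Mirroring the technical step in Proposition \ref{th-extd}, I would apply the Hom-Jacobi identity to triples of the form $(d, x \otimes t^i, y \otimes t^j)$ and $(x \otimes t^i, y \otimes t^j, z \otimes t^l)$, cyclically permute and sum to split these into separately vanishing tensorial equations, and then use that $\mathbb C[t, t^{-1}]$ has no zero divisors together with the simplicity of $\mathfrak g$ (so $\mathfrak g_0$ acts nontrivially on each $\mathfrak g_{\overline i}$ and the commutator pairing $\mathfrak g_{\overline i} \times \mathfrak g_{\overline{-i}} \to \mathfrak g_0$ is nondegenerate) to force all the components and auxiliary data to vanish.

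For $k = 0$, the structure $\Phi$ preserves each $\mathscr L_i$; write $\Phi(x \otimes t^i) = \varphi_i(x) \otimes t^i$ for $i \neq 0$, $\Phi(y) = \psi(y) + \lambda(y) d$ for $y \in \mathfrak g_0$, and $\Phi(d) = u + \mu d$. Hom-Jacobi on $(d, y_1, y_2)$ with $y_j \in \mathfrak g_0$ collapses to $[[\mathfrak g_0, \mathfrak g_0], u] = 0$, and since $\mathfrak g_0$ is simple for the diagram automorphisms underlying twisted affine Kac--Moody algebras, this forces $u = 0$. Hom-Jacobi on mixed triples $(d, x \otimes t^i, y \otimes t^{-i})$ ties the $\varphi_i$ to $\psi$ and $\lambda$, while Hom-Jacobi on triples lying entirely inside $\mathfrak g_0$, combined with Theorem \ref{th-s} applied to $\mathfrak g_0$, then pins down $\psi = \mu \id_{\mathfrak g_0}$, $\lambda = 0$, and successively $\varphi_i = \mu \id$ for all $i \neq 0$. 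The main obstacle I anticipate is the bookkeeping in the $k \neq 0$ case, in particular the careful treatment of the special indices $i \in \{0, -k\}$ at which the $\mathbb C d$ summand participates; once those homogeneous pieces are killed, the $k = 0$ analysis is essentially routine.
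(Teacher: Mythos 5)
Your overall strategy is the same as the paper's: grade $\mathscr L$ by the Euler derivation, use the decomposition (\ref{hom-lie-root}) to reduce to Hom-Lie structures of a fixed degree $k$, kill the components with $k\neq 0$, and for $k=0$ reduce to the known classification on the simple pieces. However, there is a genuine gap in your $k=0$ step: you invoke Theorem \ref{th-s} for $\mathfrak g_0$, but that theorem explicitly excludes $\Sl(2)$, and $\mathfrak g_0\simeq \Sl(2)$ does occur among twisted affine algebras --- for type $A_2^{(2)}$ the fixed subalgebra of the order-$2$ automorphism of $\Sl(3)$ is $\mathfrak{so}(3)\simeq\Sl(2)$, whose space of Hom-Lie structures is $6$-dimensional by Proposition \ref{prop-sl2}. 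So ``Hom-Jacobi on triples lying entirely inside $\mathfrak g_0$'' does not by itself pin down $\psi$. The missing idea (and the one the paper uses) is to exploit the mixed triples $(x\otimes t^i,\,y\otimes 1,\,d)$ with $i\neq 0$ \emph{first}: they force $\psi$ to agree with $\mu\id$ on the hyperplane $\Ker\lambda\cap\mathfrak g_0$, and one then checks from the explicit matrix form in the proof of Proposition \ref{prop-sl2} that a Hom-Lie structure on $\Sl(2)$ coinciding with $\mu\id$ on a codimension-one subspace equals $\mu\id$ everywhere. A related point arises at the end: to conclude $\varphi_i=\mu\id$ on each $\mathfrak g_{\overline i}$ you must either argue directly with triples having two entries in $\mathfrak g_0\otimes 1$ (using that $\mathfrak g_{\overline i}$ contains no trivial $\mathfrak g_0$-submodule), or assemble the $\varphi_i$ into a Hom-Lie structure on all of $\mathfrak g$ and apply Theorem \ref{th-s} there, noting that $\mathfrak g\not\simeq\Sl(2)$ since $\Sl(2)$ carries no nontrivial cyclic grading of the relevant kind.

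For $k\neq 0$ your plan is only a sketch, and the machinery you propose to import from Proposition \ref{th-extd} (cyclic summation followed by the tensor-separation lemma) is not the natural tool here: all elements involved are homogeneous, so the Hom-Jacobi identity already splits into separately vanishing equations by comparing degrees in $t$, with no summation trick required. The substantive content is then to choose the right triples: $(x\otimes 1,\,y\otimes 1,\,d)$ kills $\Phi(d)$; $(x\otimes t^i,\,y\otimes 1,\,d)$ with $i\neq 0$ gives $[\mathfrak g_{\overline i},\varphi_0(\mathfrak g_0)]=0$, hence $\varphi_0=0$; and triples with two entries of degree zero then give $[[\mathfrak g_0,\mathfrak g_0],\varphi_i(\,\cdot\,)]=0$, hence $\varphi_i=0$ for all $i$, again using that each $\mathfrak g_{\overline j}$ has trivial centralizer of $\mathfrak g_0$. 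You would also need to kill explicitly the components of $\Phi$ with values in $\mathbb C d$, which your outline mentions only in passing as ``auxiliary data.'' None of this is unfixable, but as written the proposal does not yet constitute a proof.
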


\begin{proof}
The Euler derivation $\id_{\mathfrak g} \otimes t \frac{\dcobound}{\dcobound t}$
acts semisimply on $\mathscr L$, with root spaces being exactly the homogeneous
components in the grading (\ref{eq-tw}) (the derivation itself, of course, goes
to the zero component). According to \S \ref{sec-str}, we may consider only 
Hom-Lie structures shifting this grading by some integer. So let $\Phi$ be a 
Hom-Lie structure on $\mathscr L$ shifting the grading (\ref{eq-tw}) by an 
integer $\kappa$. We have then for any $i \in \mathbb Z$:
$$
\Phi(x \otimes t^i) = \varphi(x) \otimes t^{i+\kappa} + 
\lambda(x)\id_{\mathfrak g} \otimes t \frac{\dcobound}{\dcobound t} ,
$$
where $x \in \mathfrak g_{i(mod\,n)}$, $\varphi: \mathfrak g \to \mathfrak g$ is
a linear map such that 
$\varphi(\mathfrak g_{i(mod\,n)}) \subseteq \mathfrak g_{(i+\kappa)(mod\,n)}$,
and $\lambda: \mathfrak g \to \mathbb C$ is a linear map vanishing on all
homogeneous components except of $\mathfrak g_{-\kappa(mod\,n)}$. Also,
\begin{align*}
&\Phi(\id_{\mathfrak g} \otimes t \frac{\dcobound}{\dcobound t}) = 
u \otimes t^\kappa
\\
\intertext{if $\kappa \ne 0$, and}
&\Phi(\id_{\mathfrak g} \otimes t \frac{\dcobound}{\dcobound t}) = 
u \otimes t^\kappa + 
\lambda \id_{\mathfrak g} \otimes t \frac{\dcobound}{\dcobound t}
\end{align*}
if $\kappa = 0$, where $\lambda \in \mathbb C$, and, in both cases, 
$u \in \mathfrak g_{\kappa(mod\,n)}$.

The Hom-Jacobi identity for triple $x \otimes t^i$, $y \otimes t^j$,
$z \otimes t^k$ yields
\begin{multline}\label{eq-m}
\Big([[x,y],\varphi(z)] + [[z,x],\varphi(y)] + [[y,z],\varphi(x)]\Big)
\otimes t^{i+j+k+\kappa} 
\\
+ (i+j)\lambda(z)[x,y] \otimes t^{i+j}
+ (k+i)\lambda(y)[z,x] \otimes t^{k+i}
+ (j+k)\lambda(x)[y,z] \otimes t^{j+k}
= 0
\end{multline}
for any $x \in \mathfrak g_{i(mod\,n)}$, $y \in \mathfrak g_{j(mod\,n)}$, 
$z \in \mathfrak g_{k(mod\,n)}$.

The Hom-Jacobi identity for triple $x \otimes 1$, $y \otimes 1$, 
$\id_{\mathfrak g} \otimes t \frac{\dcobound}{\dcobound t}$ yields 
$[[x,y],u] = 0$ for any $x,y \in \mathfrak g_{0(mod\,n)}$, and hence $u=0$. 

Assume first $\kappa \ne 0$. The Hom-Jacobi identity for triple 
$x \otimes t^i$, where $i \ne 0$, $y \otimes 1$, 
$\id_{\mathfrak g} \otimes t \frac{\dcobound}{\dcobound t}$, yields 
$[x,\varphi(y)] = 0$ for any $x \in \mathfrak g_{i(mod\,n)}$ and 
$y \in \mathfrak g_{0(mod\,n)}$, what implies 
$\varphi (\mathfrak g_{0(mod\,n)}) = 0$. Setting in (\ref{eq-m}) $i=j=0$, we get
the Hom-Jacobi identity for any triple of elements 
$x,y \in \mathfrak g_{0(mod\,n)}$ and $z \in \mathfrak g$. This implies 
$[[x,y],\varphi(z)] = 0$, and hence $\varphi$, and thus $\Phi$, is identically
zero. 

Consequently, we may assume $\kappa = 0$ (i.e., $\Phi$ preserves the grading of
$\mathscr L$). Again, the Hom-Jacobi identity for triple 
$x \otimes t^i$, where $i\ne 0$, $y \otimes 1$, 
$\id_{\mathfrak g} \otimes t \frac{\dcobound}{\dcobound t}$, yields
\begin{equation}\label{eq-i}
\lambda [x,y] - [x,\varphi(y)] - i \lambda(y) x = 0
\end{equation}
for any $x \in \mathfrak g_{i(mod\,n)}$ and $y \in \mathfrak g_{0(mod\,n)}$. 
Since $\lambda(\cdot)$ vanishes on a subspace $\mathfrak g_{0(mod\,n)}^\prime$ 
of $\mathfrak g_{0(mod\,n)}$ of codimension $1$, the last equality implies that 
$[\mathfrak g_{i(mod\,n)}, \lambda y - \varphi(y)] = 0$, and hence 
$\varphi(y) = \lambda y$ for any $y \in \mathfrak g_{0(mod\,n)}^\prime$. 
Moreover, setting in (\ref{eq-m}) $i=j=k=0$, we get that the restriction of 
$\varphi$ on $\mathfrak g_{0(mod\,n)}$ is a Hom-Lie structure on 
$\mathfrak g_{0(mod\,n)}$. Since $\mathfrak g_{0(mod\,n)}$ is a simple Lie 
algebra (see table in \cite[p.~129]{Kac}), Theorem \ref{th-s} implies that the 
restriction of $\varphi$ on $\mathfrak g_{0(mod\,n)}$ coincides with 
$\lambda \id_{\mathfrak g_{0(mod\,n)}}$ in all cases except of 
$\mathfrak g_{0(mod\,n)} \simeq \Sl(2)$. In the latter case, a quick glance at
the matrix in the proof of Proposition \ref{prop-sl2} reveals that a Hom-Lie 
structure on $\Sl(2)$ coinciding with the multiple of the identity map on a 
subspace of codimension $1$, has to coincide with the same multiple on the whole
algebra. Therefore, in all cases we have $\varphi(y) = \lambda y$ for any 
$y \in \mathfrak g_{0(mod\,n)}$. But then (\ref{eq-i}) implies that 
$\lambda(\cdot)$ vanishes on the whole $\mathfrak g_{0(mod\,n)}$. The equality
(\ref{eq-m}) implies now that $\varphi$ is a Hom-Lie structure on the whole
$\mathfrak g$. Since $\mathfrak g$ is not isomorphic to $\Sl(2)$ (there are no
nontrivial cyclic gradings in this case; see \cite[Chapter 8]{Kac} for an 
explicit description of all possible cases), 
$\varphi = \lambda \id_{\mathfrak g}$, and $\Phi = \lambda \id_{\mathscr L}$.
\end{proof}

Note that, unlike in the untwisted case, we employed here from the very 
beginning the specific structure of affine Kac-Moody algebras, without 
considering first a more general case of extended current Lie algebras, like in 
Proposition \ref{th-extd}. It is definitely possible to get a ``twisted'' analog
of Proposition \ref{th-extd} by considering a graded Lie algebra of the form
$\bigoplus_{g\in G} \big(L_g \otimes A_g\big)$, where 
$L = \bigoplus_{g \in G} L_g$ and $A = \bigoplus_{g \in G} A_g$ are Lie and
associative commutative algebra, respectively, graded by the same abelian
group $G$, and its derivation acting on the second tensor factors, subject to 
appropriate conditions. This, however, will lead to quite cumbersome statements
with even more cumbersome proofs, seemingly of much less independent interest 
than Proposition \ref{th-extd}, so we have chosen to treat the twisted 
Kac--Moody case directly.

We arrive now at the main result of the paper:

\begin{theorem}\label{th-m}
The space of Hom-Lie structures on an affine Kac--Moody algebra is linearly 
spanned by central Hom-Lie structures and the identity map.
\end{theorem}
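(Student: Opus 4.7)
The plan is to split into the untwisted and twisted cases. The untwisted case is exactly the content of Proposition \ref{prop-km}, so the only new work is for a twisted affine Kac--Moody algebra $\widehat{\mathscr L}$, and the strategy is to parallel the proof of Proposition \ref{prop-km}, with Proposition \ref{prop-tw} taking the role of Proposition \ref{th-extd}.

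First I would realize $\widehat{\mathscr L}$ as a one-dimensional central extension $\mathscr L \oplus \mathbb C z$ of the Lie algebra
$\mathscr L = \mathscr L(\mathfrak g,n) \oplus \big(\mathbb C \id_{\mathfrak g} \otimes t \frac{\dcobound}{\dcobound t}\big)$
from Proposition \ref{prop-tw}, by a suitable $2$-cocycle $\xi$. Decomposing a linear endomorphism $\varphi$ of $\widehat{\mathscr L}$ as $\varphi(x) = \psi(x) + \lambda(x)z$ for $x \in \mathscr L$ and $\varphi(z) = s + \mu z$, the framework spelled out just before Proposition \ref{prop-km} reduces the Hom-Lie condition to: $\psi \in \HomLie(\mathscr L)$, the Hom-$2$-cocycle identity (\ref{eq-2c}), and $[[\mathscr L,\mathscr L],s] = \xi([\mathscr L,\mathscr L],s) = 0$. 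Proposition \ref{prop-tw} immediately forces $\psi = c\cdot \id_{\mathscr L}$ for some $c \in \mathbb C$, and with this the equation (\ref{eq-2c}) degenerates to the ordinary $2$-cocycle identity for $\xi$, which holds by assumption.

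The main obstacle, and the only nontrivial remaining step, is verifying that $[[\mathscr L,\mathscr L],s] = 0$ forces $s = 0$; this is the twisted-case analog of the ``straightforward computation'' invoked in the proof of Proposition \ref{prop-km}. I would first observe that $[\mathscr L, \mathscr L] = \mathscr L(\mathfrak g,n)$, using that the Euler derivation $d = \id_{\mathfrak g} \otimes t \frac{\dcobound}{\dcobound t}$ sweeps out every nonzero-degree homogeneous component of the grading (\ref{eq-tw}), and that $[\mathfrak g_{\overline 0},\mathfrak g_{\overline 0}] = \mathfrak g_{\overline 0}$ since $\mathfrak g_{\overline 0}$ is simple (table in \cite[p.~129]{Kac}). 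Writing $s = \sum_i y_i \otimes t^i + \alpha d$ and imposing commutation with $\mathfrak g_{\overline 0} \otimes 1$ and then with the remaining homogeneous components of $\mathscr L(\mathfrak g,n)$, I would use the fact that $\mathfrak g_{\overline 0}$ contains a Cartan subalgebra of $\mathfrak g$ in all twisted types \cite[Chapter 8]{Kac}, so its centralizer in $\mathfrak g$ is trivial, to conclude successively that each $y_i = 0$ and then that $\alpha = 0$.

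Once $s = 0$ is established, the condition $\xi([\mathscr L,\mathscr L],s) = 0$ is automatic. Thus $\varphi$ splits as $c\cdot \id_{\widehat{\mathscr L}}$ plus a linear map into $\mathbb C z$, which is precisely a scalar multiple of the identity plus a central Hom-Lie structure, as claimed.
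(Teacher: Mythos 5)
Your overall route is exactly the paper's: the untwisted case is Proposition \ref{prop-km}, and the twisted case combines Proposition \ref{prop-tw} with the central-extension reduction stated before Proposition \ref{prop-km}; the paper's own proof is precisely this two-line reduction, so your structure and your identification of the one remaining verification (that $[[\mathscr L,\mathscr L],s]=0$ forces $s=0$) are both right. The one detail you supply that needs correction is your justification that the centralizer of $\mathfrak g_{\overline 0}$ in $\mathfrak g$ is trivial: for the twisted types $\mathfrak g_{\overline 0}$ does \emph{not} contain a Cartan subalgebra of $\mathfrak g$ (for example, for the order-$2$ diagram automorphism of $A_2$ one has $\mathfrak g = \Sl(3)$ of rank $2$ while $\mathfrak g_{\overline 0} \simeq \Sl(2)$ has rank $1$), so that premise fails in every twisted case. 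The conclusion you need is nevertheless true and follows from the table in \cite[p.~129]{Kac}: $\mathfrak g_{\overline 0}$ is simple, so its centralizer inside $\mathfrak g_{\overline 0}$ coincides with the center and vanishes, and each $\mathfrak g_{\overline j}$ with $\overline j \neq \overline 0$ is a nontrivial irreducible $\mathfrak g_{\overline 0}$-module, hence contains no nonzero $\mathfrak g_{\overline 0}$-invariant vectors. With that repair your computation giving $y_i=0$ and then $\alpha=0$, hence $s=0$, goes through, and the rest of the argument is correct.
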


\begin{proof}
The nontwisted case is covered by Proposition \ref{prop-km}, and the twisted 
case follows from Proposition \ref{prop-tw} by the same reasonings about Hom-Lie
structures on the central extension as in the proof of 
Proposition \ref{prop-km}.
\end{proof}

Theorem \ref{th-m} implies that if 
$$
\widehat{\mathfrak g} = \mathscr L(\mathfrak g,n) \oplus 
\big(\mathbb C\id_{\mathfrak g} \otimes\> t \frac{\dcobound}{\dcobound t}\big) 
\oplus \mathbb Cz
$$
is an affine Kac--Moody algebra -- untwisted (for $n=1$) or twisted (for 
$n=2,3$) -- then, as a $\widehat{\mathfrak g}$-module, 
$\HomLie (\widehat{\mathfrak g})$ is isomorphic to the direct sum of the dual 
module $\widehat{\mathfrak g}^*$ (corresponding to Hom-Lie structures of the 
form $\widehat{\mathfrak g} \to \mathbb Cz$),
and the one-dimensional trivial module (corresponding to the scalar multiples of
the identity map).

Another easy consequence is the following

\begin{corollary}
The set of multiplicative Hom-Lie structures on an affine Kac--Moody algebra
$\widehat{\mathfrak g}$ coincides with 
\begin{equation*}
\set{\id_{\widehat{\mathfrak g}} + \lambda \beta}{\lambda \in \mathbb C} \cup
\set{\lambda\beta}{\lambda \in \mathbb C} ,
\end{equation*}
where the linear map $\beta$ sends $\id_{\mathfrak g} \otimes t \frac{\dcobound}{\dcobound t}$ to $z$, and is zero
on $\mathscr L(\mathfrak g,n) \oplus \mathbb Cz$.
\end{corollary}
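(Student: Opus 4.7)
The plan is to start from the structure theorem for Hom-Lie structures on $\widehat{\mathfrak g}$ provided by Theorem \ref{th-m}: any Hom-Lie structure $\varphi$ can be written uniquely as $\varphi = \lambda \id_{\widehat{\mathfrak g}} + f$, where $\lambda \in \mathbb{C}$ and $f: \widehat{\mathfrak g} \to \mathbb{C}z$ is an arbitrary linear map. I will write $f(x) = g(x)z$ for a linear functional $g \in \widehat{\mathfrak g}^*$, reducing the problem to determining which pairs $(\lambda, g)$ yield, in addition, a homomorphism.

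The multiplicativity condition $\varphi([x,y]) = [\varphi(x),\varphi(y)]$ simplifies because $z$ is central: the right-hand side equals $\lambda^2 [x,y]$, while the left-hand side equals $\lambda[x,y] + g([x,y]) z$. Decomposing along
$$
\widehat{\mathfrak g} = \mathscr L(\mathfrak g,n) \>\oplus\> \mathbb C\big(\id_{\mathfrak g}\otimes t\tfrac{\dcobound}{\dcobound t}\big) \>\oplus\> \mathbb Cz
$$
and observing that every bracket in $\widehat{\mathfrak g}$ lies in $\mathscr L(\mathfrak g,n) \oplus \mathbb Cz$, the equation splits into two pieces: $(\lambda - \lambda^2)A = 0$, where $A$ is the $\mathscr L(\mathfrak g,n)$-component of $[x,y]$, and $(\lambda - \lambda^2)c + g([x,y]) = 0$, where $c$ is the $z$-coefficient of $[x,y]$. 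Since $[\widehat{\mathfrak g}, \widehat{\mathfrak g}]$ projects nontrivially onto $\mathscr L(\mathfrak g,n)$, the first equation forces $\lambda \in \{0, 1\}$; in either case, the second then reduces to the requirement $g([\widehat{\mathfrak g}, \widehat{\mathfrak g}]) = 0$.

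A short bracket computation in the Kac--Moody relations confirms that $[\widehat{\mathfrak g}, \widehat{\mathfrak g}] = \mathscr L(\mathfrak g, n) \oplus \mathbb Cz$ (the Euler derivation is not in the image of $\ad$, whereas commutators of the form $[x\otimes t^i, y\otimes t^{-i}]$ already hit the center). Hence $g$ is determined by its value on $\id_{\mathfrak g} \otimes t\tfrac{\dcobound}{\dcobound t}$ alone, so $f = \mu \beta$ for some $\mu \in \mathbb C$. The two cases $\lambda = 1$ and $\lambda = 0$ therefore produce exactly the two families $\id_{\widehat{\mathfrak g}} + \mu \beta$ and $\mu \beta$. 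Finally, a direct verification — again using that $z$ is central and that $\beta$ vanishes on the derived algebra — shows that every element of both families is genuinely multiplicative. There is no real obstacle beyond correctly identifying the derived subalgebra of $\widehat{\mathfrak g}$.
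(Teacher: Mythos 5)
Your argument is correct and follows essentially the same route as the paper: starting from Theorem \ref{th-m}, writing a Hom-Lie structure as $\lambda\,\id_{\widehat{\mathfrak g}} + g(\cdot)z$, using centrality of $z$ to reduce multiplicativity to $\lambda^2=\lambda$ together with the vanishing of $g$ on $[\widehat{\mathfrak g},\widehat{\mathfrak g}] = \mathscr L(\mathfrak g,n)\oplus\mathbb Cz$, and then checking the converse directly. Your explicit identification of the derived subalgebra (in particular that $z$ lies in it, via the nontriviality of the Kac--Moody cocycle) is the one point the paper leaves implicit, and you handle it correctly.
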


\begin{proof}
By Theorem \ref{th-m}, any Hom-Lie structure on $\widehat{\mathfrak g}$ is of 
the form $\mu \id_{\widehat{\mathfrak g}} + \psi$ for some linear map 
$\psi: \widehat{\mathfrak g} \to \mathbb C z$, and $\mu \in \mathbb C$. Then the
multiplicativity condition (\ref{eq-hom}) yields 
$\psi|_{\mathscr L(\mathfrak g, n) \oplus \mathbb C z} = 
(\mu^2 - \mu)\id_{\mathscr L(\mathfrak g, n) \oplus \mathbb C z}$, and hence 
$\psi(\mathscr L(\mathfrak g, n)) = 0$, and either $\mu = 1$ and $\psi(z) = z$,
or $\mu = 0$ and $\psi(z) = 0$. As $\psi$ can map 
$\id_{\mathfrak g} \otimes t \frac{\dcobound}{\dcobound t}$ to an arbitrary 
multiple of $z$, the statement of the corollary follows.
\end{proof}

Also, it is trivial to derive from here that the set of regular Hom-Lie 
structures on $\widehat{\mathfrak g}$ coincides with 
$\set{\id_{\widehat{\mathfrak g}} + \lambda \beta}{\lambda \in \mathbb C}$. The
latter statement can be also derived in an alternative way, by looking first at
automorphisms of $\widehat{\mathfrak g}$ (there is a wast literature devoted to
description of automorphisms of affine Kac--Moody algebras, see, for example,
\cite[\S 4]{kac-wang} and references therein), and singling out those which 
satisfy the Hom-Jacobi identity.

\section{Jordan algebras and $\delta$-derivations}\label{sec-jord}

It was observed in \cite{chin2} and \cite{xie-liu} that for all the cases
computed so far, Hom-Lie structures on a given Lie algebra $L$ are closed with
respect to Jordan product. That is, if $\varphi, \psi \in \HomLie(L)$, then
$$
\frac 12 (\varphi \circ \psi + \psi \circ \varphi) \in \HomLie(L) ,
$$
so $\HomLie(L)$ forms a Jordan subalgebra of the Jordan algebra $\End(L)^{(+)}$.
Thus an interesting question arises: to which degree this is a common 
phenomenon, and which Jordan algebras can be realized in this way?

Some Lie algebras making appearance in this paper satisfy this property, and 
some are not. For example, for current Lie algebras of the form 
$\mathfrak g \otimes A$, Hom-Lie structures form a Jordan algebra. Indeed,
it follows from (\ref{eq-la}) that 
$\HomLie (\mathfrak g \otimes A) \simeq \HomLie(\mathfrak g) \otimes A$. As
$\HomLie(\mathfrak g)$ is a Jordan algebra, $\HomLie(\mathfrak g) \otimes A$ is
a ``current'' Jordan algebra (by Proposition \ref{prop-sl2} and 
Theorem \ref{th-s}, nontrivial only in the case of $\mathfrak g = \Sl(2)$, and 
in all other cases merely isomorphic to the associative commutative algebra 
$A$).

Hom-Lie structures on affine Kac--Moody Lie algebras, like for any Lie algebra 
whose Hom-Lie structures are linearly spanned by the identity map and central 
Hom-Lie structures, are closed already with respect to composition, so they form
an associative (and hence Jordan) algebra.

On the other hand, it is possible to find examples of current Lie algebras 
$L \otimes A$ for which Hom-Lie structures are not closed with respect to 
Jordan product. For example, assume that $A$ contains a unit $1$, and, following
again the formula (\ref{eq-la}), consider the Jordan product of two Hom-Lie 
structures of the form $\varphi \otimes 1$ and $\psi \otimes \alpha$, where
$\varphi \in \HomLie(L)$, $\psi \in \HomtwoNilp(L)$, and $\alpha \in \End(A)$:
$$
\frac 12 (\varphi \circ \psi + \psi \circ \varphi ) \otimes \alpha .
$$

Obviously, $\psi \circ \varphi \in \HomtwoNilp(L)$, and hence
$(\psi \circ \varphi) \otimes \alpha \in \HomLie(L \otimes A)$, so the question 
reduces to whether $(\varphi \circ \psi) \otimes \alpha$ is a Hom-Lie structure
on $L \otimes A$, i.e., whether
\begin{equation}\label{eq-zero}
  [[x,y],\varphi(\psi(z))] \otimes ab\alpha(c) 
+ [[z,x],\varphi(\psi(y))] \otimes ca\alpha(b)
+ [[y,z],\varphi(\psi(x))] \otimes bc\alpha(a) = 0  
\end{equation}
holds for any $x,y,z \in L$ and $a,b,c \in A$.

Taking $A = K[t]$ (or, for that matter, any $\mathbb Z$-graded algebra with 
sufficient number of nonzero graded components), and monomials $a,b,c \in K[t]$
of different degrees, it is always possible to manufacture $\alpha$ in such a 
way that the degrees of polynomials $ab\alpha(c)$, $ca\alpha(b)$, $bc\alpha(a)$
will be different, and hence the equality (\ref{eq-zero}) reduces to 
$[[L,L],\varphi(\psi(L))] = 0$, i.e., to the question whether 
$\varphi \circ \psi \in \HomtwoNilp(L)$. Now take 
$L = \langle x,y \vertbar [x,y]=x \rangle$, the two-dimensional nonabelian Lie algebra. Then 
$\HomLie(L) = \End(L)$, 
$\HomtwoNilp(L) = \set{\psi \in \End(L)}{\psi(L) \subseteq \langle x \rangle}$,
and taking 
$$
\varphi: \>
\begin{aligned}
x &\mapsto y \\
y &\mapsto 0
\end{aligned}
\hskip 40pt
\psi: \>
\begin{aligned}
x &\mapsto x \\
y &\mapsto 0
\end{aligned}
$$
we have $\varphi \circ \psi = \varphi \notin \HomtwoNilp(L)$.

Another, somewhat related to the above, question concerns $\delta$-derivations.
It is proved in \cite[Proof of Theorem 1]{filippov-1} that for any 
$\delta \ne 0,1$, any $\delta$-derivation of a Lie algebra to itself is a 
Hom-Lie structure. On the other hand, one may easily provide examples of Lie 
algebras for which not every Hom-Lie structure is a $\delta$-derivation. In a 
somewhat different vein, $\delta$-derivation can be trivially put in the 
framework of Hom-Lie structures by extending the underlying Lie algebra: namely,
for any Lie algebra $L$ and $\delta$-derivation $D$ of $L$, $\delta \ne 0$, the
semidirect sum $L \oplus KD$ admits a Hom-Lie structure $\alpha$ by letting 
$\alpha$ act on $L$ identically, and $\alpha(D) = \frac{1}{\delta} D$. We leave
a thorough exploration of relationships between $\delta$-derivations and Hom-Lie structures to the 
future.

Finally, yet another interesting question is how Hom-Lie structures behave under
deformations of Lie algebras. In the case where Hom-Lie structures form a Jordan
algebra, one may ask then how this fits the deformation theory of Jordan 
algebras.

\section*{Acknowledgements}

This work was started during the visit of the second author to Universit\'e de
Haute-Alsace. GAP \cite{gap} was used to confirm some of the computations.


\begin{thebibliography}{HLS}

\bibitem[DZ]{comm2} A. Dzhumadil'daev and P. Zusmanovich, 
\emph{Commutative $2$-cocycles on Lie algebras}, 
J. Algebra \textbf{324} (2010), N4, 732--748; arXiv:0907.4780.

\bibitem[F]{filippov-1} V.T. Filippov, 
\emph{On $\delta$-derivations of Lie algebras}, 
Sibirsk. Mat. Zh. \textbf{39} (1998), N6, 1409--1422 (in Russian);
Siber. Math. J. \textbf{39} (1998), N6, 1218--1230 (English translation).

\bibitem[G]{gap} The GAP Group, 
\emph{GAP -- Groups, Algorithms, and Programming}, 
Version 4.8.2, 2016; \newline \texttt{http://www.gap-system.org/}

\bibitem[HLS]{hls} J. Hartwig, D. Larsson, and S. Silvestrov,
\emph{Deformations of Lie algebras using $\sigma$-derivations},
J. Algebra \textbf{295} (2006), N2, 314--361; arXiv:math/0408064.

\bibitem[JL]{jl} Q. Jin and X. Li, 
\emph{Hom-Lie algebra structures on semi-simple Lie algebras},
J. Algebra \textbf{319} (2008), N4, 1398--1408.

\bibitem[K]{Kac} V.G. Kac, \emph{Infinite Dimensional Lie Algebras}, 3rd ed., Cambridge Univ. Press, 1995.

\bibitem[KW]{kac-wang} \bysame{} and S.P. Wang, 
\emph{On automorphisms of Kac-Moody algebras and groups}, 
Adv. Math. \textbf{92} (1992), N2, 129--195.

%\bibitem[Ko]{kostr} A.I. Kostrikin, \emph{Around Burnside}, 
%Nauka, Moscow, 1986 (in Russian); Springer, 1990 (English translation).

\bibitem[LL]{leger-luks} G.F. Leger and E.M. Luks, 
\emph{Generalized derivations of Lie algebras}, 
J. Algebra \textbf{228} (2000), N1, 165--203.

\bibitem[M]{paradigm} A. Makhlouf, 
\emph{Paradigm of nonassociative Hom-algebras and Hom-superalgebras},
Proceedings of Jordan Structures in Algebra and Analysis Meeting, 
Editorial C\'irculo Rojo, Almer\'ia, 2010, 143--177; arXiv:1001.4240

\bibitem[MS]{ms} \bysame{} and S. Silvestrov, \emph{Hom-algebra structures},
J. Gen. Lie Theory Appl. \textbf{2} (2008), N2, 51--64; \newline arXiv:math/0609501.

\bibitem[XJL]{chin2} W. Xie, Q. Jin, and W. Liu,
\emph{Hom-structures on semi-simple Lie algebras},
Open Math. \textbf{13} (2015), 617--630.

\bibitem[XL]{xie-liu} \bysame{} and W. Liu,
\emph{Hom-structures on simple graded Lie algebras of finite growth},
J. Algebra Appl. \textbf{16} (2017), 1750154.

\bibitem[Z1]{low} P. Zusmanovich,
\emph{Low-dimensional cohomology of current Lie algebras and analogs of the 
Riemann tensor for loop manifolds}, 
Lin. Algebra Appl. \textbf{407} (2005), 71--104; arXiv:math/0302334.

\bibitem[Z2]{vavilov-fest} \bysame,
\emph{A compendium of Lie structures on tensor products}, 
Zapiski Nauchnykh Seminarov POMI \textbf{414} (2013) (N.A. Vavilov Festschrift),
40--81; reprinted in J. Math. Sci. \textbf{199} (2014), N3, 266--288; 
arXiv:1303.3231.

\end{thebibliography}
\end{document}